\newtheorem{theorem}{Theorem}[section]
\newtheorem{proposition}[theorem]{Proposition}
\theoremstyle{definition}
\newtheorem{definition}[theorem]{Definition}
\newtheorem{example}[theorem]{Example}
\newtheorem{question}[theorem]{Question}
\newtheorem{remark}[theorem]{Remark}
\newcommand{\R}{\mathbb{R}}
\newcommand{\N}{\mathbb{N}}
\newcommand{\Z}{\mathbb{Z}}
\newcommand{\ux}{\underline{x}}
\newcommand{\ua}{\underline{\alpha}}
\newcommand{\ub}{\underline{\beta}}
\newcommand{\ug}{\underline{\gamma}}
\begin{document}

\markboth{Lewis}
{Factorization in The Monoid of Integrally Closed Ideals}

%

\title{Factorization in The Monoid of Integrally Closed Ideals}
\author{Emmy Lewis}
\curraddr{Department of Mathematics, Cornell University, 310 Malott Hall, Ithaca, NY 14853, USA}
\email{jl2826@cornell.edu}

\keywords{Integral Closure; Newton Polyhedron; Ideal Factorization; Polytope Group.}
\subjclass[2010]{Primary 13A05, 13F15; Secondary 13C05, 52B20}


\maketitle

\begin{abstract}
Given a Noetherian ring $A$, the collection of all integrally closed ideals in $A$ which contain a nonzerodivisor, denoted $ic(A)$, forms a cancellative monoid under the operation $I*J=\overline{IJ}$, the integral closure of the product. The monoid is torsion-free and atomic -- every integrally closed ideal in $A$ containing a nonzerodivisor can be factored in this $*$-product into $*$-irreducible integrally closed ideals. Restricting to the case where $A$ is a polynomial ring and the ideals in question are monomial, we show that there is a surjective homomorphism from the Integral Polytope Group onto the Grothendieck group of integrally closed monomial ideals under translation invariance of their Newton Polyhedra. Notably, the Integral Polytope Group, the Grothendieck group of polytopes with integer vertices under Minkowski addition and translation invariance, has an explicit basis, allowing for explicit factoring in the monoid.
\end{abstract}



\section{Introduction}

We investigate the factorization of integrally closed ideals in commutative rings. Factorization in this manner stem from Zariski's Theorem on the unique factorization in two-dimensional regular local rings \cite{zar13,zar38}. From an example of Lipman and Huneke \cite{lip88} uniqueness of factorization failed in higher dimensions of regular local rings and set-up the study of the monoid of integrally closed ideals. By monoid we mean a semigroup with identity, we now define the monoid in more generality than \cite{lip88}:

\begin{definition}
\label{icAdef}
For a commutative Noetherian ring $A$, define
\[
ic(A)=\{\,I \mid I\, \text{is an integrally closed ideal in}\, A\, \text{and}\, I \,\text{contains a nonzerodivisor} \}
\] 
and put an operation $*$ on $ic(A)$ by setting $I*J=\overline{IJ}$ for $I,J\in ic(A)$. 
\end{definition}

As we see in \cref{cmonthm}, $ic(A)$ is a commutative cancellative monoid; and as we see in \cref{torsionF} $ic(A)$ is a torsion-free monoid. Here by cancellative we mean that $I*J=I*K$ implies $J=K$ for $I,J,K\in ic(A)$. This is the monoid where we focus the study of factorization and note the the case where $A$ is a polynomial ring and the ideals are taken to be monomial will also be of primary interest and factorization of integrally closed monomial ideals has been of interest particularly in 2 and 3 variables by Crispin Qui\~nonez \cite{qui10} and Gately \cite{gat00}, respectively. In the case of arithmetic and factorization of ideals under normal multiplication, recent work has been done in \cite{ger19,ger21,bas21}. Using the general factorization theory of monoids we deduce that factorization can take place in our subject monoid (c.f. \cref{factor}):

\begin{theorem}
For any Noetherian ring $A$, the monoid $ic(A)$ is an atomic monoid. That is, all integrally closed ideals containing nonzerodivisors can be $*$-factored into $*$-irreducible ideals.
\end{theorem}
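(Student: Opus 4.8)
The plan is to reduce the statement to a standard fact from the arithmetic theory of monoids: a commutative cancellative monoid satisfying the ascending chain condition on principal ideals (ACCP) is atomic. By \cref{cmonthm} the monoid $ic(A)$ is commutative and cancellative, and its only unit is $A$ itself: if $K*L=A$ then $1\in\overline{KL}$, and a monic integral dependence relation for $1$ over $KL$ forces $-1\in KL$, hence $KL=A$ and $K=A$. Thus the atoms of $ic(A)$ are precisely its $*$-irreducible elements, and "$ic(A)$ is atomic" is exactly the assertion of the theorem (with $A$ read as the empty $*$-product). So it remains to verify ACCP and then apply the standard implication.

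The key observation for ACCP is that $*$-divisibility refines reverse inclusion of ideals: if $J\mid I$ in $ic(A)$, say $I=J*K=\overline{JK}$ with $K\in ic(A)$, then $JK\subseteq J$ together with $J$ being integrally closed gives $I=\overline{JK}\subseteq\overline{J}=J$. Now let $(I_1)\subseteq(I_2)\subseteq(I_3)\subseteq\cdots$ be an ascending chain of principal ideals of the monoid $ic(A)$. For each $n$, the containment $(I_n)\subseteq(I_{n+1})$ is equivalent to $I_{n+1}\mid I_n$ in $ic(A)$, hence to $I_n\subseteq I_{n+1}$ as ideals of $A$ by the observation above. Therefore $I_1\subseteq I_2\subseteq I_3\subseteq\cdots$ is an ascending chain of ideals in the Noetherian ring $A$; it stabilizes, so $I_n=I_{n+1}$ for $n$ large, whence $(I_n)=(I_{n+1})$ for $n$ large. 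This is ACCP.

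To finish, one invokes (or re-proves) the implication ACCP $\Rightarrow$ atomic: if a non-unit $I\in ic(A)$ were not a finite $*$-product of atoms, then $I$ is not an atom, so $I=J*K$ with $J,K$ non-units, and at least one factor---say $J$---is again not a finite $*$-product of atoms; cancellativity shows $(I)\subsetneq(J)$, since $(I)=(J)$ would give $J=J*K$ and hence $K=A$, a contradiction. Iterating builds a strictly ascending chain of principal ideals, contradicting ACCP. The only delicate point in the whole argument is the correct translation between monoid divisibility and ring-ideal containment in the second paragraph; once that is in hand, the result follows from Noetherianity of $A$ together with general monoid theory, and I anticipate no serious obstacle.
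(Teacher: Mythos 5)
Your proposal is correct and follows essentially the same route as the paper's proof of \cref{factor}: both verify the ascending chain condition on principal monoid-ideals by observing that $I=\overline{JK}\subseteq\overline{J}=J$ translates $*$-divisibility into ring-ideal containment, and then invoke Noetherianity of $A$. The only difference is cosmetic: you re-prove the standard implications (ACCP implies atomic, and reducedness of $ic(A)$) that the paper instead cites or establishes separately in \cref{torsionF}.
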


We refer the reader to \cite{gat05,hei14} for more study on $*$-irreducible ideals, particularly in regular local rings. By $*$-irreducible ideals we mean those ideals $I$ such that $I=\overline{JK}$ implies $J=(1)$ or $K=(1)$. The factorization may not be unique, although it can be unique as in the case of Zariski's Theorem, and more generally can be bounded when the ring is a regular local ring (c.f. \cref{rlrbfm}).

We can imbue $ic(A)$ with a congruence relation that preserves cancellativity, setting up the connection with convex geometry. In particular, when we restrict the relation and monoid to the monomial case in a polynomial ring, the congruence is exactly translation invariance of the Newton Polyhedra. In a more general case, the congruence is via isomorphism of ideals if the ring was normal.

\begin{theorem}
(c.f. \cref{equivMon}) Define a relation $\sim$ on $ic(A)$ by $I\sim J$ if and only if there are nonzero $f,g\in A$ such that $\overline{fI}=\overline{gJ}$ for $I,J\in ic(A)$. Then $\sim$ is a congruence and $ic(A)/\mathord\sim$ is a cancellative monoid.
\end{theorem}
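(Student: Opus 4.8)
The plan is to recognize $\sim$ as the congruence on $ic(A)$ induced by a suitable submonoid, for which ``being a congruence'' and ``having a cancellative quotient'' are purely formal consequences of the fact that $ic(A)$ is commutative and cancellative (\cref{cmonthm}). The engine throughout is two standard facts about integral closures of ideals: $\overline{M}\cdot N\subseteq\overline{MN}$ for ideals $M,N\subseteq A$, whence $\overline{\overline{M}\,\overline{N}}=\overline{MN}$ and $\overline{a\overline{M}}=\overline{aM}$ for $a\in A$. In particular, for a nonzerodivisor $f$ the ideal $\langle f\rangle:=\overline{(f)}$ lies in $ic(A)$; the set $\mathcal{P}:=\{\langle f\rangle\mid f\text{ a nonzerodivisor of }A\}$ is a submonoid of $(ic(A),*)$, since $\langle f\rangle*\langle g\rangle=\langle fg\rangle$ and $\langle 1\rangle=A$; and $\langle f\rangle*I=\overline{\,\overline{(f)}\,I\,}=\overline{(f)I}=\overline{fI}$ for every $I\in ic(A)$. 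Consequently, for nonzerodivisors $f,g$ the equation $\overline{fI}=\overline{gJ}$ is exactly $\langle f\rangle*I=\langle g\rangle*J$, i.e. $p*I=q*J$ for some $p,q\in\mathcal{P}$.

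The first and main step is the reduction lemma: if $\overline{fI}=\overline{gJ}$ with $f,g$ merely nonzero (and $I,J\in ic(A)$), then $\overline{f'I}=\overline{g'J}$ for some nonzerodivisors $f',g'$; equivalently, $\sim$ coincides with the relation $\sim_{\mathcal{P}}$ given by ``$p*I=q*J$ for some $p,q\in\mathcal{P}$.'' I expect this to be the crux. When $A$ is a domain it is vacuous, since nonzero and nonzerodivisor coincide. In general the point is that every ideal in $ic(A)$ contains a nonzerodivisor and is therefore contained in none of the (finitely many) associated primes of $A$: if $f$ is a zerodivisor then $fI\subseteq\overline{fI}$ is contained in an associated prime, which forces the common closure $\overline{fI}=\overline{gJ}$ to be a very small ideal; one then uses that the restrictions of $I$ and $J$ to that ``bad'' locus are trivial to replace $f,g$ by nonzerodivisors with the same closures. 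Making this routing around zerodivisors precise in an arbitrary, possibly non-reduced Noetherian ring is the step I expect to require care.

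Granting the reduction lemma, the rest is formal. Reflexivity and symmetry of $\sim=\sim_{\mathcal{P}}$ are immediate (take $p=q=A$). For transitivity, from $p*I=q*J$ and $r*J=s*K$ with $p,q,r,s\in\mathcal{P}$ we get $(rp)*I=r*(p*I)=r*(q*J)=q*(r*J)=q*(s*K)=(qs)*K$, and $rp,qs\in\mathcal{P}$, so $I\sim K$. For compatibility with $*$, from $p*I=q*I'$ and $r*J=s*J'$ we get $(pr)*(I*J)=(p*I)*(r*J)=(q*I')*(s*J')=(qs)*(I'*J')$, so $I*J\sim I'*J'$; hence $\sim$ is a congruence. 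Finally, if $I*K\sim J*K$, say $p*(I*K)=q*(J*K)$ with $p,q\in\mathcal{P}$, then $(p*I)*K=(q*J)*K$, and cancellativity of $ic(A)$ (\cref{cmonthm}) yields $p*I=q*J$, i.e. $I\sim J$; therefore $ic(A)/\!\sim$ is a cancellative monoid.
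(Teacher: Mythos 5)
Your formal framework is correct and, for the domain case, gives a complete and cleaner proof than the paper's. The paper proves the theorem by direct computation with integral closures, repeatedly invoking the identity $\overline{a\overline{M}}=\overline{aM}$ from \cite[Exercise 1.1]{HSInt} to establish transitivity, compatibility with $*$, and cancellativity one at a time. You instead recognize $\sim$ as the congruence induced by the submonoid $\mathcal{P}=pc(A)$ of \cref{pcAprop} via the observation $\overline{fI}=\overline{(f)}*I$, after which transitivity, congruence, and cancellativity of the quotient are purely formal consequences of associativity, commutativity, and cancellativity of $(ic(A),*)$ established in \cref{cmonthm}. This is the better-organized argument: it isolates the one ring-theoretic input ($\overline{\overline{(f)}I}=\overline{fI}$) and makes the rest manifestly structural; it also fits naturally with the later identification $\widetilde{g}(A)\cong G(ic(A))/G(pc(A))$ in \cref{gpisothm}.

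One caveat on the ``reduction lemma'' ($\sim$ coincides with $\sim_{\mathcal{P}}$): you sketch an argument via associated primes but, as you acknowledge, do not actually prove it, and it is not clear it holds for an arbitrary Noetherian ring. However, you should note that the paper's own proof carries exactly the same hypothesis silently: the displayed chain ends with ``so that as $A$ is a domain $gl$ and $hf$ are nonzero,'' even though the theorem statement does not assume $A$ is a domain. In a domain, ``nonzero'' and ``nonzerodivisor'' coincide, your reduction lemma is vacuous, and your proof is complete. So the gap you flag is real, but it is the paper's gap as much as yours; if the theorem is read with the same implicit domain hypothesis the author uses, your proposal is fully correct and structurally preferable. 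If the intent is truly an arbitrary Noetherian ring, then the cleanest fix—applicable to both proofs—is to restate the relation with ``nonzerodivisor'' in place of ``nonzero,'' which is what $pc(A)$ already requires and what your $\sim_{\mathcal{P}}$ formalizes.
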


In our study, we introduce the unique method of embedding this monoid where factorization takes place into its Grothendieck group. Recall that commutative cancellative monoids can naturally embed into an Abelian group called its Grothendieck group. If the monoid is written multiplicatively, its Grothendieck group is the group of formal fractions of elements of the monoid, and cancellativity guarantees that the natural map is injective.

Restricting to the case where $ic(R)$ consists of integrally closed monomial ideals in $R=k[x_1,\dots,x_d]$ for some field $k$, and restricting the congruence to monomial elements allows us to make the connection to convex geometry. We call the resulting Grothendieck group $\widetilde{g}(R)$ Specifically we make a connection with the integral polytope group (see \cite[Section 2]{funke}). Consider the monoid of all bounded closed convex sets with vertices lying in $\Z^d$ under Minkowski addition (that is $A+B=\{a+b\,|\,a\in A, b\in B\}$), denoted $P(\Z)$. The monoid is cancellative, so we can embed it inside of its Grothendieck group, denoted $\mathcal{P}(\Z)$. We now place the equivalence relation induced by setting polytopes that are translates of each other as equivalent. With this we obtain the integral polytope group $\mathcal{P}(\Z)_t$.

\begin{theorem}
(c.f. \cref{surjectiveGroup}) There is a surjective group homomorphism $\phi:\mathcal{P}(\Z)_t\to\widetilde{g}(R)$ induced by the map 
\[
\phi([P])=[(\ux^{\ua}\,|\,\ua\in(P_{\N}+\R_+^d)\cap\N^d)]
\]
\end{theorem}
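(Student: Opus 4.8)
The plan is to route the whole statement through the classical dictionary between integrally closed monomial ideals and Newton polyhedra. Recall that for a monomial ideal $I\subseteq R$ one has $\NP(I)=\mathrm{conv}(\ua_1,\dots,\ua_m)+\R_+^d$, where $\ux^{\ua_1},\dots,\ux^{\ua_m}$ are the minimal generators; that $I$ is integrally closed exactly when $I=(\ux^{\ua}\mid\ua\in\NP(I)\cap\N^d)$; and that $\NP(\overline{IJ})=\NP(I)+\NP(J)$ (passing to the integral closure does not change the Newton polyhedron, and the Newton polyhedron of a product is the Minkowski sum of the factors). Dually, by the decomposition theorem for rational polyhedra, a polyhedron with recession cone $\R_+^d$ and only lattice vertices is exactly $P+\R_+^d$ for the lattice polytope $P$ equal to the convex hull of its vertices. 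Granting these, I would (i) build a monoid homomorphism from $P(\Z)$ into the submonoid of $ic(R)$ consisting of monomial ideals, modulo $\sim$; (ii) extend it to the Grothendieck groups and observe that it factors through translation; and (iii) read off surjectivity from the decomposition theorem.

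For (i): given $P\in P(\Z)$, translate $P$ by an integer vector into the positive orthant, call the result $P_{\N}$, and set $I_P:=(\ux^{\ua}\mid\ua\in(P_{\N}+\R_+^d)\cap\N^d)$. Since $P_{\N}$ is a lattice polytope, $P_{\N}+\R_+^d$ has lattice vertices, so by the characterization above $I_P$ is an integrally closed (nonzero, hence containing a nonzerodivisor) monomial ideal with $\NP(I_P)=P_{\N}+\R_+^d$. Changing the integer translate replaces $I_P$ by $\overline{\ux^{\ub}I_P}$ for some $\ub\in\N^d$, so the class of $I_P$ in $ic(R)/\mathord\sim$ does not depend on that choice, and for the same reason it depends only on the translation class of $P$; thus $[P]\mapsto[I_P]$ is a well-defined, translation-invariant assignment on $P(\Z)$. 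It turns Minkowski sums into $*$-products because $(P+Q)+\R_+^d=(P+\R_+^d)+(Q+\R_+^d)$ (using $\R_+^d+\R_+^d=\R_+^d$), whence $\NP(I_{P+Q})=\NP(I_P)+\NP(I_Q)=\NP(\overline{I_P I_Q})$; as both ideals are integrally closed and such ideals are recovered from their Newton polyhedra, $[I_{P+Q}]=[I_P]*[I_Q]$ in $ic(R)/\mathord\sim$. So we obtain a monoid homomorphism $P(\Z)\to ic(R)/\mathord\sim$ landing in the monomial submonoid, whose Grothendieck group is $\widetilde{g}(R)$.

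For (ii): by the universal property of the Grothendieck group of the cancellative monoid $P(\Z)$, this homomorphism extends uniquely to a group homomorphism $\mathcal{P}(\Z)\to\widetilde{g}(R)$ with $[P]\mapsto[I_P]$; since translates of $P$ have the same image, the map kills the subgroup defining $\mathcal{P}(\Z)_t$ and descends to $\phi\colon\mathcal{P}(\Z)_t\to\widetilde{g}(R)$, which is exactly the displayed formula. For (iii): $\widetilde{g}(R)$ is generated by the classes $[I]$ of integrally closed monomial ideals $I$; the Newton polyhedron $\NP(I)$ has recession cone $\R_+^d$ and lattice vertices (each vertex is the exponent of a minimal generator), so $\NP(I)=P+\R_+^d$ with $P:=\mathrm{conv}(\text{vertices of }\NP(I))$ a lattice polytope, and then $\NP(I_P)$ is an integer translate of $\NP(I)$, giving $\phi([P])=[I_P]=[I]$. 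Hitting a generating set, $\phi$ is surjective.

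The substantive input is entirely in the dictionary quoted at the outset — that the integral closure of a monomial ideal is computed by taking all lattice points of its Newton polyhedron, and that $\NP(\overline{IJ})=\NP(I)+\NP(J)$ — together with enough care with integer translations that the construction is genuinely translation-invariant, so that it factors through $\mathcal{P}(\Z)_t$ and not merely $\mathcal{P}(\Z)$. I expect that to be the only real obstacle: once those facts and the decomposition theorem for polyhedra are on the table, the homomorphism property and surjectivity are formal consequences of the universal property of the Grothendieck group.
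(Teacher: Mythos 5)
Your proposal is correct and follows essentially the same route as the paper's proof: well-definedness via the observation that different translates $P_{\N}$ yield ideals equal up to monomial multiplication, the homomorphism property via $\NP(\overline{I_PI_Q})=\NP(I_P)+\NP(I_Q)=(P_{\N}+Q_{\N})+\R_+^d$ together with the fact that integrally closed monomial ideals are determined by their Newton polyhedra, and surjectivity by hitting the classes $[I]$, which generate $\widetilde{g}(R)$. The only difference is organizational (you construct the map on $P(\Z)$ and descend via the universal property, while the paper defines it directly on translation classes), and your surjectivity step is, if anything, slightly more carefully phrased.
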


In particular, $\mathcal{P}(\Z)_t$ is a free Abelian group where the basis can be made geometrically explicit as in \cite{funke}. Using that the integral polytope group has an explicit basis, we can use the map $\phi$ to construct a colon ideal factorization of each integrally closed monomial ideal in terms of this basis (c.f. \cref{basisFactor}). It is, then, a primary question to explicitly describe the basis of the integral polytope in terms of the map $\phi$ in order to express integrally closed monomial ideals in terms of this basis.




\section{Preliminary}

We now outline preliminaries of integral closure that we use throughout the article. For full details and proofs we refer the reader to \cite[Chapters 1]{HSInt} and \cite{HubSwa08}.

For an ideal $I$ in a commutative Noetherian ring $A$, we define the \textit{integral closure} of $I$ to be the ideal
\[
\overline{I}\!=\!\{x\!\in\! A\,|\,x^n+a_1x^{n-1}+\dots+a_{n-1}x+a_n\!=\!0 \textnormal{ for some }n\!\in\!\N \textnormal{, } a_i\!\in\! I^i \textnormal{ for }1\!\leqslant\! i\!\leqslant\! n\}.
\]

For monomial ideals, let $R\!=\!\mathbb{K}[x_1,\dots,x_d]$ for a field $\mathbb{K}$ and $d\!\in\!\N$, and $I$ a monomial ideal of $R$. We denote a monomial of $R$ by $\ux^{\ua}\!=\!x_1^{\alpha_1}\dots x_d^{\alpha_d}$ where $\ua\!=\!(\alpha_a,\dotsm,\alpha_d)\!\in\!\N^d$. Then we say the \textit{exponent set} of $I$ is $E(I)\!=\!\{\ua\!\in\!\N^d\,|\,\ux^{\ua}\!\in\! I\}$, and
\[
\textnormal{NP}(I)\!=\!\textnormal{conv}(E(I))
\]
is called the \textit{Newton Polyhedron}, i.e. the convex hull of the exponent set of $I$. The Newton Polyhedron connects convex geometry to the integral closure via \cite[Proposition 1.4.6]{HSInt} so that for any monomial ideal $I$ we have
\[
E(\overline{I})\!=\!\textnormal{NP}(I)\cap\N^d.
\]
That is, we can read off the integral closure of an ideal via the lattice points of the Newton Polyhedron.


\begin{theorem}
\label{cmonthm}
For a commutative Noetherian ring $A$, the pair $(ic(A),*)$ is a commutative cancellative monoid.
\end{theorem}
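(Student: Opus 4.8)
The plan is to verify the monoid axioms for $(ic(A),*)$ in three stages: first that $*$ is well-defined (i.e.\ closure under the operation), then that it is associative and commutative with identity $(1)=A$, and finally — the substantive part — that it is cancellative.

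First I would check well-definedness. Given $I,J\in ic(A)$, the product $\overline{IJ}$ is an integrally closed ideal by the standard fact that the integral closure of an ideal is itself integrally closed (\cite[Chapter 1]{HSInt}), so I only need that $\overline{IJ}$ contains a nonzerodivisor. If $f\in I$ and $g\in J$ are nonzerodivisors, then $fg\in IJ\subseteq\overline{IJ}$, and $fg$ is a nonzerodivisor in any commutative ring (the product of two nonzerodivisors is a nonzerodivisor). Hence $\overline{IJ}\in ic(A)$. Commutativity is immediate from $IJ=JI$, and $(1)=A$ is integrally closed, contains the nonzerodivisor $1$, and satisfies $\overline{IA}=\overline{I}=I$, so it is the identity.

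Associativity is where I would invoke the key compatibility of integral closure with products. The relevant lemma is that $\overline{\overline{I}\,J}=\overline{IJ}$ for ideals $I,J$ (equivalently, integral closure is an ideal-semigroup operation in the appropriate sense); this is again in \cite[Chapter 1]{HSInt}. Using it twice, $(I*J)*K=\overline{\overline{IJ}\,K}=\overline{IJK}=\overline{I\,\overline{JK}}=I*(J*K)$, giving associativity.

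The main obstacle is cancellativity: showing $\overline{IJ}=\overline{IK}$ forces $J=K$ for $I,J,K\in ic(A)$. The standard tool is the \emph{determinant trick} / \emph{valuative criterion} for integral closure: passing to the total ring of fractions (which is legitimate since all ideals involved contain nonzerodivisors), one uses that $\overline{IJ}=\overline{IK}$ and the existence of a nonzerodivisor $f\in I$ to deduce $\overline{J}=\overline{K}$, and since $J,K$ are already integrally closed this gives $J=K$. Concretely, I would argue: take $x\in J$; then $fx\in IJ\subseteq\overline{IJ}=\overline{IK}$, and I want to conclude $x\in\overline{IK}:(f)\subseteq\overline{K}$ — the containment $\overline{IK}:(f)\subseteq\overline{K}$ when $f$ is a nonzerodivisor contained in $I$ is exactly a colon-cancellation property of integral closure (see \cite[Chapter 1]{HSInt}, often phrased as: if $f\in I$ is a nonzerodivisor then $\overline{IK}=f\overline{K}$ when... ) — more carefully, one shows $fy\in\overline{IK}$ implies $y\in\overline{K}$ using that $f\in I$ together with the integral equation, essentially because $I$-coefficients absorb the extra factor of $f$. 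Symmetry then yields $J\subseteq\overline{K}=K$ and $K\subseteq\overline{J}=J$. The delicate point to get right is precisely this colon-cancellation step and ensuring the nonzerodivisor hypothesis (built into membership in $ic(A)$) is used to legitimize it; I expect the author's proof to cite the corresponding proposition in \cite{HSInt} or \cite{HubSwa08} for this cancellation and otherwise proceed along these lines.
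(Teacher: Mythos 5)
Your closure, identity, commutativity, and associativity steps match the paper exactly (the paper also reduces associativity to $\overline{\overline{IJ}\,K}=\overline{IJK}=\overline{I\,\overline{JK}}$, citing \cite[Exercise 1.1]{HSInt}). The problem is your cancellativity argument: the colon-cancellation step you rely on is false as stated. You claim that for a nonzerodivisor $f\in I$ one has $\overline{IK}:(f)\subseteq\overline{K}$, equivalently that $fy\in\overline{IK}$ with $f\in I$ forces $y\in\overline{K}$. Take $A=k[x,y]$, $I=(x,y)$, $K=(x)$, $f=x$: then $\overline{IK}=(x^2,xy)$ and $xy\in\overline{IK}$, yet $y\notin(x)=\overline{K}$; indeed $\overline{IK}:(f)=(x,y)\not\subseteq(x)$. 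The point is that cancellation cannot be done one nonzerodivisor of $I$ at a time; the genuine hypothesis $\overline{IJ}=\overline{IK}$ (equality for the \emph{whole} ideal $I$) is essential, and the standard proofs use either the determinant trick applied to a suitable finitely generated faithful module or the valuative criterion (for every relevant discrete valuation $v$, $v(I)+v(J)=v(I)+v(K)$, hence $v(J)=v(K)$ for all $v$, hence $\overline{J}=\overline{K}$), with the nonzerodivisor hypothesis ensuring $v(I)<\infty$ and the Noetherian hypothesis ensuring the valuative criterion applies.

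The paper itself does not reprove this: it simply invokes \cite[Lemma 1.14]{lip88} or \cite[Exercise 1.2]{HSInt} for cancellativity, exactly the citation you said you expected. So your write-up is fine if you delete the sketched mechanism and cite that cancellation lemma outright; as written, the sketch asserts a containment that fails, and repairing it requires replacing the elementwise colon argument with the determinant-trick or valuation-theoretic proof of the cancellation law.
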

\begin{proof}
Note that the product of ideals which contain nonzerodivisors still contain nonzerodivisors. Thus $ic(A)$ is closed under $*$ and that the operation is commutative since $A$ is commutative. The ideal $(1)$ is integrally closed so that $I*(1)=(1)*I=\overline{I(1)}=\overline{I}=I$ for $I\in ic(A)$. Hence $ic(A)$ has an identity. By \cite[Exercise 1.1]{HSInt}, in particular because $A$ is Noetherian, for $I,J,K\in ic(A)$ we have
\[
(I*J)*K=\overline{IJ}*K=\overline{\overline{IJ}K}=\overline{IJK}=\overline{I\overline{JK}}=I*(J*K)
\]
so that $ic(A)$ is associative and hence a monoid. By \cite[Lemma 1.14]{lip88} or \cite[Exercise 1.2]{HSInt}, since each ideal contains nonzerodivisors, $ic(A)$ is also cancellative.
\end{proof}

We note that the condition of the ring being Noetherian and the ideals containing nonzerodivisors are necessary conditions for associativity and cancellativity, respectively.

\section{Monoidal Properties}

Here we briefly outline the common additional properties of $ic(A)$ for any ring $A$ that aid in characterizing the monoid.

\begin{proposition}\label{torsionF}
For any ring $A$, the monoid $ic(A)$ has the following properties:
\begin{enumerate}
    \item[$(i)$] $ic(A)$ is \textit{conical}, that is if $I*J=(1)$ then $I=J=(1)$;
    \item[$(ii)$] $ic(A)$ is \textit{reduced}, i.e. the only invertible element is $(1)$, the identity;
    \item[$(iii)$] $ic(A)$ is torsion-free.
\end{enumerate}
\end{proposition}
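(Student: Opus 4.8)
The plan is to prove the three properties in the order $(i) \Rightarrow (ii) \Rightarrow (iii)$, since each is essentially a consequence of the preceding one together with the cancellativity established in \cref{cmonthm}. For $(i)$, suppose $I * J = \overline{IJ} = (1)$. Then $1 \in \overline{IJ}$, so $1$ satisfies an integral dependence equation $1 + a_1 + \cdots + a_n = 0$ with $a_i \in (IJ)^i$. Rearranging, $1 = -(a_1 + \cdots + a_n) \in IJ$ since each $(IJ)^i \subseteq IJ$ for $i \geqslant 1$. But $IJ \subseteq I$ and $IJ \subseteq J$, so $1 \in I$ and $1 \in J$, forcing $I = J = (1)$.

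For $(ii)$, note that "reduced" is in fact equivalent to "conical" for any monoid: if $I$ is invertible with inverse $J$, then $I * J = (1)$, and conicality immediately gives $I = (1)$. So $(ii)$ follows formally from $(i)$ with no extra work.

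For $(iii)$, torsion-freeness means that $I^{*n} = J^{*n}$ for some $n \geqslant 1$ implies $I = J$ (where $I^{*n}$ denotes the $n$-fold $*$-power). Here I would invoke the standard fact that a cancellative, commutative, reduced, \emph{root-closed} monoid is torsion-free — but more directly, the cleanest route is the classical lemma that a commutative cancellative reduced monoid is torsion-free provided it has no nontrivial units and the relation $I^{*n} = J^{*n}$ can be "descended". The key algebraic input is that $\overline{I^n} = \overline{J^n}$ (which is what $I^{*n} = J^{*n}$ unwinds to, using associativity as in \cref{cmonthm}) implies $\overline{I} = \overline{J}$, i.e.\ $I = J$. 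This last implication is a known property of integral closure: two ideals with equal integral closures of their $n$-th powers have the same integral closure, because integral closure is determined by the Rees valuations, and the valuation of $I^n$ is $n$ times the valuation of $I$ — so $n\, \overline{v}(I) = n\, \overline{v}(J)$ for every relevant valuation $\overline{v}$ forces $\overline{v}(I) = \overline{v}(J)$. Equivalently one can argue via $\overline{I^n} = \overline{J^n} \implies \overline{I} = \overline{(I^n : I^{n-1})}$-type manipulations, or cite that $ic(A)$ embeds in a product of value groups $\bigoplus_v \Z$ (indexed by Rees valuations), which is torsion-free, and the embedding is a monoid homomorphism.

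The main obstacle will be step $(iii)$: properties $(i)$ and $(ii)$ are quick, but establishing torsion-freeness cleanly requires either the Rees-valuation description of integral closure or a self-contained proof that $\overline{I^n} = \overline{J^n}$ implies $\overline{I} = \overline{J}$. If a valuation-theoretic embedding is available from the cited references (\cite{HSInt}, \cite{HubSwa08}), I would use it directly, since it simultaneously packages cancellativity, the conical property, and torsion-freeness as consequences of $ic(A) \hookrightarrow \bigoplus \Z$. Otherwise, I would spell out the $n$-th power valuation argument: pick $x \in \overline{I}$, note $x^n \in \overline{I^n} = \overline{J^n}$, and show via the integral-dependence equations combined with a normalization/minimality argument on $n$ that $x \in \overline{J}$; by symmetry this gives $\overline{I} = \overline{J}$, hence $I = J$ since both are already integrally closed.
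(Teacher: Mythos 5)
Your proposal is correct, and it diverges from the paper mainly in how it handles the two nontrivial parts. For $(i)$ the paper argues via radicals ($\overline{IJ}\subseteq\sqrt{IJ}=\sqrt{I}\cap\sqrt{J}$, then $1\in\sqrt{I}$ forces $1\in I$), while you unwind the integral dependence equation for $1$ directly to get $1\in IJ$; both are elementary and equally valid, yours being marginally more self-contained. Part $(ii)$ is identical. The real difference is $(iii)$: the paper simply cites a lemma of McAdam (their \cite[Lemma 2.1]{mca89}, applied with ratio $n/n=1/1$ and using that the ideals contain nonzerodivisors), whereas you sketch a proof of the key implication $\overline{I^n}=\overline{J^n}\Rightarrow\overline{I}=\overline{J}$ via valuations. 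Your valuation route does work and has the advantage of being self-contained: for $x\in J$ one has $x^n\in\overline{J^n}=\overline{I^n}$, so $nv(x)\geqslant nv(I)$ for every Rees valuation $v$ of $I$, giving $x\in\overline{I}=I$, and symmetrically $I\subseteq J$. Two small points to make explicit if you write this up: the nonzerodivisor hypothesis is what guarantees the ideals are not contained in any minimal prime, so that Rees valuations exist and take finite values (this is exactly where the paper's citation also uses that hypothesis, and it is why the blanket ``any ring'' should really be read as Noetherian with the standing nonzerodivisor assumption); and your suggested embedding into $\bigoplus_v\Z$ should be a map into a \emph{product} $\prod_v\Z$ of value groups over all divisorial valuations, since a fixed ideal generally has positive value at infinitely many of them --- the torsion-freeness conclusion is unaffected, as products of $\Z$ are torsion-free.
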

\begin{proof}
For (\textit{i}) suppose that $I*J=(1)$, that is $\overline{IJ}=(1)$. Then $\overline{IJ}\subseteq\sqrt{IJ}=\sqrt{I}\cap \sqrt{J}=(1)$ so that $1\in \sqrt{I}$ and $1\in \sqrt{J}$. Hence $1\in I$ and $1\in J$ so that $I=J=(1)$. Clearly then for (\textit{ii}), to show $ic(A)$ is reduced, suppose that $I\in ic(A)$ has inverse $J$, then $I*J=(1)$; however, as $ic(A)$ is conical, $I=J=(1)$. Finally, for (\textit{iii}), let $I,J\in ic(A)$ such that $\overline{I^n}=\overline{J^n}$ for some natural number $n$. By \cite[Lemma 2.1]{mca89}, as the ideals contain nonzerodivisors and since $n/n=1/1$ we have that $I=J$.
\end{proof}

Another common characterizing property of monoids is whether the monoid is primary. In particular, in \cite{geroldinger21}, the primary property has importance in understanding some factorization properties. However, as we will see, the monoid of integrally closed ideals being primary is a rare property. 

\begin{definition}
We call a monoid $M$ \textit{primary} if for every $a,b\in M$ we have that there exists a $n\in\mathbb{N}$ such that $b^n\in aM$.
\end{definition}

We note the following result that is analogous to the result that a domain is one-dimensional and local if and only if its multiplicative monoid of nonzero elements is primary.

\begin{proposition}
Suppose $A$ is a domain. The monoid $ic(A)$ being primary implies that $A$ is a dimension $1$ local domain.
\end{proposition}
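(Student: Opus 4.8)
The plan is to feed prime ideals into the primary hypothesis and to exploit the fact that radical ideals are integrally closed. First I would record two elementary observations. Since $A$ is a domain, every nonzero ideal contains a nonzerodivisor, so $ic(A)$ is precisely the set of all nonzero integrally closed ideals of $A$. Second, every prime ideal $\p$ of $A$ is integrally closed: if $x^n+a_1x^{n-1}+\dots+a_n=0$ with $a_i\in\p^i\subseteq\p$, then $x^n\in\p$, hence $x\in\p$. In particular, every nonzero prime ideal of $A$ is an element of $ic(A)$.

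Next, let $\p$ and $\q$ be nonzero primes of $A$. The primary hypothesis applied in $ic(A)$ yields an $n\in\N$ with $\q^n\in\p\cdot ic(A)$, that is, $\overline{\q^n}=\overline{\p K}$ for some $K\in ic(A)$. Since $\p K\subseteq\p$ and integral closure is order preserving with $\overline{\p}=\p$, this gives $\q^n\subseteq\overline{\q^n}\subseteq\p$, so $\q=\sqrt{\q^n}\subseteq\sqrt{\p}=\p$; by symmetry $\p\subseteq\q$, whence $\p=\q$. Thus $A$ has at most one nonzero prime ideal. Together with $(0)$ this forces $\dim A\le 1$, and since a maximal ideal is a nonzero prime whenever $A$ is not a field, $A$ then has a unique maximal ideal and is therefore local.

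Finally I would rule out $\dim A=0$, i.e.\ show $A$ is not a field. If $A$ were a field then $ic(A)=\{(1)\}$ would be a trivial group, and a primary monoid is not a group (this is exactly what makes the proposition the analogue of the classical statement quoted above); if on the other hand $A$ is not a field it has a nonzero non-unit $a$, and then $\overline{(a)}\neq(1)$ is a non-identity element of the reduced monoid $ic(A)$ (c.f.\ \cref{torsionF}), so $ic(A)$ is again not a group. Either way the primary hypothesis forces $A$ to carry a nonzero prime, so $\dim A=1$. The argument is short once the two preliminary observations are set down; the only delicate point is this degenerate field case, where one must invoke the convention that primary monoids are non-groups (or read the statement as tacitly assuming $A$ is not a field) — otherwise the core implication, that ``primary'' collapses the nonzero spectrum of $A$ to a single point, is immediate.
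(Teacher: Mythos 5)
Your argument is correct, and it runs on the same engine as the paper's proof: feed prime ideals into the primary hypothesis, use that primes are integrally closed and (since $A$ is a domain) lie in $ic(A)$, and then pass from $\overline{\q^{\,n}}=\overline{\p K}\subseteq\overline{\p}=\p$ to an inclusion of radicals. The decomposition differs, though. The paper fixes a maximal ideal $\mathfrak{m}$ and an \emph{arbitrary} $I\in ic(A)$ and applies primariness in both directions, obtaining $\mathfrak{m}^n\subseteq I\subseteq\mathfrak{m}$; this gives the stronger intermediate conclusion that every element of $ic(A)$ is $\mathfrak{m}$-primary, from which any nonzero prime is forced to equal $\mathfrak{m}$. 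You instead compare two nonzero primes $\p,\q$ symmetrically and conclude directly that the nonzero spectrum is a single point; this is leaner but proves only the stated conclusion, not the $\mathfrak{m}$-primariness of all of $ic(A)$. Your explicit handling of the degenerate case is a point in your favor: under the paper's literal definition of primary (which does not require $M\neq M^\times$), a field gives the trivial monoid $ic(A)=\{(1)\}$, which \emph{is} primary, so the statement needs either the standard convention that primary monoids are not groups or the tacit assumption that $A$ is not a field. The paper's proof makes the latter assumption silently, by using the maximal ideal $\mathfrak{m}$ as an element of $ic(A)$ (hence $\mathfrak{m}\neq 0$); you name the issue and resolve it, which is the more careful reading.
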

\begin{proof}
If we take a maximal ideal $\mathfrak{m}$ and some ideal $I\in ic(A)$ then we have that $\overline{\mathfrak{m}^n}\in I*ic(A)$ for some $n\in\mathbb{N}$. That is, there is a $J\in ic(A)$ such that
\[
\mathfrak{m}^n\subseteq\overline{\mathfrak{m}^n}=\overline{IJ}\subseteq I
\]
and conversely there is a $m\in\mathbb{N}$ and a $K\in ic(A)$ with $\overline{I^m}\in\mathfrak{m}*ic(A)$, or as above
\[
\overline{I^m}=\overline{\mathfrak{m}K}\subseteq\mathfrak{m}.
\]
Hence taking radicals we get that $\sqrt{I}\subseteq\mathfrak{m}$. So we have $\mathfrak{m}^n\subseteq I\subseteq\mathfrak{m}$ so that every $I\in ic(A)$ is $\mathfrak{m}$-primary. In particular, since all nonzero primes are in $ic(A)$ since $A$ is a domain, we have that if $\mathfrak{p}$ is a nonzero prime, then $\mathfrak{p}=\mathfrak{m}$. Hence $A$ is a dimension one local domain.
\end{proof}

We now begin our focus on factorization through investigating the atomic property of monoids. Being the weakest form of factorization, the property is a prerequisite for further study of the factorization structure of the monoid of integrally closed ideals.

\begin{definition}
In a monoid $(M,+)$, $a\in M$ is an \textit{atom} if $a=x+y$ implies $x=0$ or $y=0$, where $0$ is the identity. The monoid $(M,+)$ is called \textit{atomic} if $M$ can be generated by atoms.
\end{definition}

In the context of $ic(A)$ we call the atoms in the monoid by the name $*$-irreducible ideals following the literature as in \cite{lip88} and definition of irreducible ideals.

\begin{example}
Let $(R,\mathfrak{m})$ be a two-dimensional regular local ring. Then as $R$ is a domain, \cref{cmonthm} holds. Then by Zariski's theorem (see \cite[Theorems 14.4.4, 14.4.6]{HSInt}), $ic(R)$ is an atomic monoid whose atoms are the irreducible ideals. If $I\in ic(R)$ is irreducible, $I=J*K=\overline{JK}=JK$ implies $J=(1)$ or $K=(1)$. The monoid $ic(R)$ is also said to be \textit{free} on the the subset $S$ of $ic(R)$ of irreducible ideals. 

\end{example}

To show that the monoid of integrally closed ideals is atomic, we must introduce the concept of monoid-ideals, i.e. subsets of monoids with the absorption property familiar to ring-ideals.

\begin{definition}
An subset $W$ of a monoid $(M,+)$ is called an \textit{$M$-ideal} if $W+M\subseteq W$. We call $W$ \textit{principal} if $W=m+M$ for some $m\in M$.
\end{definition}

\begin{theorem}\label{factor}
For any Noetherian ring $A$, the monoid $ic(A)$ is an atomic monoid. That is, all integrally closed ideals containing nonzerodivisors can be $*$-factored into $*$-irreducible ideals.
\end{theorem}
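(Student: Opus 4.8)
The plan is to derive atomicity from the ascending chain condition on principal monoid-ideals (ACCP): a commutative cancellative monoid satisfying ACCP is atomic. Since \cref{cmonthm} and \cref{torsionF} tell us $ic(A)$ is commutative, cancellative and reduced, it suffices to first verify ACCP for $ic(A)$ using that $A$ is Noetherian, and then run the standard ACCP-implies-atomic argument, written multiplicatively with the operation $*$.

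For the ACCP step, the crucial point is that $*$-divisibility refines ordinary containment in $A$. If $I*ic(A)\subseteq J*ic(A)$, then $I\in J*ic(A)$, so $I=\overline{JK}$ for some $K\in ic(A)$; since $JK\subseteq J=\overline{J}$ this gives $I\subseteq J$. Consequently any ascending chain of principal $ic(A)$-ideals $I_1*ic(A)\subseteq I_2*ic(A)\subseteq\cdots$ yields an ascending chain of ideals $I_1\subseteq I_2\subseteq\cdots$ in $A$, which stabilizes because $A$ is Noetherian; say $I_n=I_N$ for all $n\geqslant N$. Then $I_n*ic(A)=I_N*ic(A)$ for all $n\geqslant N$, so the original chain stabilizes and ACCP holds.

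For the passage to atomicity, call $I\in ic(A)$ \emph{bad} if $I\neq(1)$ and $I$ is not a $*$-product of $*$-irreducible ideals. Suppose for contradiction that the set of bad ideals is nonempty; by ACCP the nonempty set $\{I*ic(A)\mid I\text{ bad}\}$ has a maximal element, say $I*ic(A)$ with $I$ bad. A bad ideal is neither $(1)$ nor $*$-irreducible, so $I=\overline{JK}$ with $J\neq(1)$ and $K\neq(1)$. Both containments $I*ic(A)\subseteq J*ic(A)$ and $I*ic(A)\subseteq K*ic(A)$ are \emph{proper}: if, say, $I*ic(A)=J*ic(A)$, then $I$ and $J$ divide one another, hence are associates, and since $ic(A)$ is reduced $I=J$; cancelling in $\overline{JK}=I=J=\overline{J(1)}$ forces $K=(1)$, a contradiction. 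By maximality neither $J$ nor $K$ is bad, so each is $(1)$ or a $*$-product of $*$-irreducibles; hence so is $I=\overline{JK}$ (the empty product being $(1)$), contradicting badness. Therefore no ideal is bad, i.e.\ every element of $ic(A)$ is a $*$-product of $*$-irreducible ideals, so $ic(A)$ is atomic.

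The one step requiring care — and the main obstacle — is the ACCP verification: one must use exactly the implication $I*ic(A)\subseteq J*ic(A)\Rightarrow I\subseteq J$, whose converse fails, and one should observe that this same interplay, combined with reducedness and cancellativity, is precisely what forces the two containments in the atomicity argument to be strict. Everything past that is the textbook derivation of atomicity from ACCP.
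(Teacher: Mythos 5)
Your proof is correct and follows essentially the same route as the paper: reduce atomicity to the ascending chain condition on principal monoid-ideals, and verify ACCP by noting that $I*ic(A)\subseteq J*ic(A)$ forces $I=\overline{JK}\subseteq J$, so Noetherianity of $A$ stabilizes the chain. The only difference is that you prove the standard ACCP-implies-atomic step (via a maximal bad principal ideal, using cancellativity and reducedness) inline, whereas the paper simply cites it from the monoid-factorization literature.
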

\begin{proof}
By \cite[Proposition 1.1.4]{ger06book}, if a monoid satisfies the ascending chain condition for principal $M$-ideals, then $M$ is atomic. Let 
\[
I_1*ic(A)\subseteq I_2*ic(A)\subseteq\dots\subseteq I_n*ic(A)\subseteq\dots
\]
be an ascending chain of principal $ic(A)$-ideals. Note that $I_i\in I_{i+1}*ic(A)$ for all $i$. Hence, for each $i$ there is a $K_i\in ic(A)$ such that $I_i=\overline{I_{i+1}K_i}$. Then as $I_{i+1}K_i\subseteq I_{i+1}$ we have $I_i=\overline{I_{i+1}K_i}\subseteq\overline{I_{i+1}}=I_{i+1}$ for each $i$. Hence, we have an ascending chain of the ideals 
\[
I_1\subseteq I_2\subseteq\dots\subseteq I_n\subseteq\dots
\]
which stabilizes at say $n$ since $A$ is Noetherian. That is, $I_n=I_{n+1}=\dots$ hence we have
\[
I_n*ic(A)=I_{n+1}*ic(A)=\dots,
\]
i.e. the chain in $ic(A)$ stabilizes. Therefore, $ic(A)$ has the ascending chain property for principal $ic(A)$-ideals, and is thus atomic.
\end{proof}

Now that we know that factorization into $*$-irreducible ideals occurs for integrally closed ideals, we note that prime ideals form part of this base of atoms generating the monoid.

\begin{proposition}
If $\mathfrak{p}\in ic(A)$ is a prime ideal, then $\mathfrak{p}$ is an atom, i.e. $\mathfrak{p}$ is $*$-irreducible.
\end{proposition}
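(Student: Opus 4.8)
The plan is to argue directly from the definition of $*$-irreducibility, using only primeness, the basic fact that radical ideals are integrally closed, and cancellativity from \cref{cmonthm}. Suppose $\mathfrak p = J*K = \overline{JK}$ with $J,K\in ic(A)$; I want to conclude $J=(1)$ or $K=(1)$.

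First I would observe that $JK\subseteq\overline{JK}=\mathfrak p$, so since $\mathfrak p$ is prime we get $J\subseteq\mathfrak p$ or $K\subseteq\mathfrak p$; say, without loss of generality, $J\subseteq\mathfrak p$. Next I would note the reverse containment: since $JK\subseteq J$ and $J$ is integrally closed (a prime ideal is radical, and $\overline{J}\subseteq\sqrt J$ always, so $J=\overline J$), we have $\mathfrak p=\overline{JK}\subseteq\overline{J}=J$. Combining the two gives $J=\mathfrak p$.

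Finally I would invoke cancellativity. Since $\mathfrak p$ is integrally closed, $\mathfrak p*(1)=\overline{\mathfrak p}=\mathfrak p=\overline{\mathfrak p K}=\mathfrak p*K$, and cancellativity of $(ic(A),*)$ forces $K=(1)$. Hence one of the two factors is the unit ideal, so $\mathfrak p$ is $*$-irreducible, i.e.\ an atom.

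There is no real obstacle here: the only subtlety worth flagging explicitly is that a prime ideal $\mathfrak p\in ic(A)$ is automatically its own integral closure, which is what lets us pass freely between $\overline{\mathfrak p K}$ and the $*$-product and then cancel. Everything else is immediate from the prime splitting property and \cref{cmonthm}.
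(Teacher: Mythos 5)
Your argument is correct and takes essentially the same route as the paper: primeness gives $J\subseteq\mathfrak{p}$, the reverse containment $\mathfrak{p}=\overline{JK}\subseteq\overline{J}=J$ gives $J=\mathfrak{p}$, and cancellativity (which the paper leaves implicit in ``forcing the other factor to be $(1)$'') finishes the proof. One small slip worth fixing: your parenthetical justifies $J=\overline{J}$ by noting that a prime ideal is radical, but $J$ is not assumed prime --- the correct and simpler reason is that $J\in ic(A)$ is integrally closed by the definition of $ic(A)$, so this does not affect the validity of the argument.
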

\begin{proof}
Suppose that $\mathfrak{p}=\overline{IJ}$ for $I,J\in ic(A)$. Then $IJ\subseteq\mathfrak{p}$ and without loss of generality, $I\subseteq\mathfrak{p}$. Now $\mathfrak{p}=\overline{IJ}\subseteq I$ and hence we have $\mathfrak{p}=I$ forcing $J=(1)$ so that $\mathfrak{p}$ is $*$-irreducible.
\end{proof}

We note, however, that prime ideals may not be prime elements in the monoid, counter to the case of the monoid of ideals under usual ideal multiplication (c.f. \autoref{lipmanHochster}).

One general question in characterizing factorization in monoids is to investigate which class of factorization the monoid belongs. In particular, whether the factorization is unique as in the case of Zariski's theorem, whether there are finitely many factorizations of any given elements (called a finite factorization monoid), or whether the lengths of all factorizations of a given element are bounded. Uniqueness and finiteness are clearly stronger properties than boundedness, and we will be focusing on boundedness here.

\begin{definition}
An atomic monoid $M$ is called a \textit{bounded factorization monoid} or \textit{BFM} if for any element $m\in M$ the set \[
L(m)=\{ k\,|\,m=m_1m_2\dots m_k \text{ for atoms } m_i \}
\]
is finite; i.e. the length of any factorization is bounded.
\end{definition}

\begin{theorem}\label{rlrbfm}
If $(A,\mathfrak{m})$ is a regular local ring, then $ic(A)$ is a bounded factorization monoid (BFM).
\end{theorem}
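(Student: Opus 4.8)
The plan is to produce a monoid homomorphism $\ell \colon ic(A) \to (\Z_{\geq 0},+)$ into the additive monoid of nonnegative integers with the property that $\ell(I) = 0$ if and only if $I = (1)$. Granting such an $\ell$, the theorem is immediate: by \cref{factor} every $I \in ic(A)$ factors as $I = I_1 * \cdots * I_k$ into atoms, each atom is a non-unit and hence $\ne (1)$ since $ic(A)$ is reduced (\cref{torsionF}), so $\ell(I_j) \geq 1$ for every $j$; therefore $k \leq \sum_{j=1}^{k} \ell(I_j) = \ell(I)$, and $L(I) \subseteq \{0,1,\dots,\ell(I)\}$ is finite.

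The homomorphism $\ell$ will be the $\mathfrak{m}$-adic order of an ideal. Since $A$ is regular local of dimension $d$, the associated graded ring $\textnormal{gr}_{\mathfrak{m}}(A) \cong (A/\mathfrak{m})[X_1,\dots,X_d]$ is a polynomial ring over a field, hence a domain; consequently
\[
v(x) \;=\; \textnormal{ord}_{\mathfrak{m}}(x) \;=\; \sup\{\, n \mid x \in \mathfrak{m}^n \,\}
\]
is a discrete valuation on $A$: $v(xy) = v(x) + v(y)$ and $v(x+y) \geq \min\{v(x),v(y)\}$ for nonzero $x,y$, and $v(x) < \infty$ by the Krull intersection theorem. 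For $I \in ic(A)$ — which is nonzero since it contains a nonzerodivisor — set $\ell(I) = \textnormal{ord}_{\mathfrak{m}}(I) := \min\{\, v(x) \mid x \in I \setminus \{0\} \,\} = \max\{\, n \mid I \subseteq \mathfrak{m}^n \,\}$.

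I would then check three things. First, if $I \ne (1)$ then $I \subseteq \mathfrak{m}$ (as $A$ is local), so $\ell(I) \geq 1$, while $\ell((1)) = 0$. Second, $\textnormal{ord}_{\mathfrak{m}}(IJ) = \textnormal{ord}_{\mathfrak{m}}(I) + \textnormal{ord}_{\mathfrak{m}}(J)$ for ideals $I,J$: the inequality ``$\geq$'' comes from applying the valuation inequality to a general element $\sum_i x_i y_i$ of $IJ$, and for ``$\leq$'' one picks $x \in I$, $y \in J$ attaining the two minima and uses $v(xy) = v(x)+v(y)$ — which is exactly where the domain property of $\textnormal{gr}_{\mathfrak{m}}(A)$, hence regularity, enters. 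Third, and most importantly, passing to the integral closure does not change the order: $\textnormal{ord}_{\mathfrak{m}}(\overline{K}) = \textnormal{ord}_{\mathfrak{m}}(K)$ for every ideal $K$. Here $K \subseteq \overline{K}$ gives ``$\leq$''; for ``$\geq$'' take $x \in \overline{K}$ with $x^n + a_1 x^{n-1} + \cdots + a_n = 0$ and $a_i \in K^i$, and apply $v$ to $x^n = -\sum_{i=1}^{n} a_i x^{n-i}$ to get $n\,v(x) \geq \min_{1 \leq i \leq n} \big( v(a_i) + (n-i) v(x) \big) \geq \min_{1 \leq i \leq n} \big( i\,v(K) + (n-i) v(x) \big)$, whence $i\,v(x) \geq i\,v(K)$ for some $i \geq 1$, i.e.\ $v(x) \geq v(K)$. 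Combining the last two points, $\ell(I*J) = \textnormal{ord}_{\mathfrak{m}}(\overline{IJ}) = \textnormal{ord}_{\mathfrak{m}}(IJ) = \ell(I) + \ell(J)$, so together with the first point $\ell$ has the required properties.

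I expect the third point to be the only genuinely subtle step: once one observes that $\textnormal{ord}_{\mathfrak{m}}$ is a bona fide valuation on the regular local ring $A$ — equivalently, a Rees valuation of $\mathfrak{m}$ — the invariance of the order under integral closure reduces to the short computation above (or to the description of $\overline{K}$ as an intersection of valuation-bounded ideals). Everything else is bookkeeping with the valuation axioms, together with the atomicity and reducedness of $ic(A)$ already recorded in \cref{factor} and \cref{torsionF}.
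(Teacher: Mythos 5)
Your proposal is correct and follows essentially the same route as the paper: the $\mathfrak{m}$-adic order $\textnormal{ord}_{\mathfrak{m}}$, a valuation because $A$ is regular local, gives a monoid homomorphism $ic(A)\to\N$ under which only the identity $(1)$ has value $0$, bounding all factorization lengths by $\textnormal{ord}_{\mathfrak{m}}(I)$. The only difference is cosmetic: you verify $\textnormal{ord}_{\mathfrak{m}}(\overline{K})=\textnormal{ord}_{\mathfrak{m}}(K)$ by a direct computation with an equation of integral dependence, where the paper cites \cite[Theorem 6.7.8]{HSInt}.
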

\begin{proof}
Recall that for any element $x\in A$ we can set
\[
\text{ord}_{\mathfrak{m}}(x) = \text{max}\{ n \mid x\in\mathfrak{m}^n\}
\]
and that since $A$ is a regular local ring we have that this is a discrete valuation of rank one. So we can define 
\[
\text{ord}_{\mathfrak{m}}(I) = \text{max}\{ n \mid I\subseteq\mathfrak{m}^n\}
\]
for any ideal $I\subseteq A$. As $A$ is a domain and $\text{ord}_{\mathfrak{m}}$ is a valuation we have that $\text{ord}_{\mathfrak{m}}(\overline{I})=\text{ord}_{\mathfrak{m}}(I)$ \cite[Theorem 6.7.8]{HSInt}. Since $A$ is a regular local ring, $\text{ord}_{\mathfrak{m}}(IJ)=\text{ord}_{\mathfrak{m}}(I)+\text{ord}_{\mathfrak{m}}(J)$ for ideals $I,J\subseteq A$. Hence we have a monoid homomorphism $\text{ord}_{\mathfrak{m}}:ic(A)\rightarrow\mathbb{N}$ since
\[
\text{ord}_{\mathfrak{m}}(\overline{IJ})=\text{ord}_{\mathfrak{m}}(IJ)=\text{ord}_{\mathfrak{m}}(I)+\text{ord}_{\mathfrak{m}}(J).
\]
In particular, since we can $*$-factor any ideal $I\in ic(A)$ into $*$-irreducible ideals, say $I_1,\dots I_k$ we have that $\text{ord}_{\mathfrak{m}}(I)=\text{ord}_{\mathfrak{m}}(I_1)+\dots+\text{ord}_{\mathfrak{m}}(I_k)$.

Since the only ideal with order $0$ is $A$, our identity here, the length of any factorization is bounded by $\text{ord}_{\mathfrak{m}}(I)$ since the longest factorization we can have here is where $\text{ord}_{\mathfrak{m}}(I_i)=1$ for all $i$. 
\end{proof}

\begin{remark}
Notice in the above proof that if $\text{ord}_{\mathfrak{m}}(I)=1$, then $I$ is $*$-irreducible.
\end{remark}

\begin{example}\label{lipmanHochster}
Not all $*$-irreducible ideals are prime ideals, and the factorization into $*$-irreducible ideals isn't necessarily unique. Let $A=k[[x,y,z]]$ for some field $k$. Then from \cite[Equation (0.1)]{lip88} we have
\[
(x,y,z)(x^3,y^3,z^3,xy,xz,yz)=(x^2,y,z)(x,y^2,z)(x,y,z^2)
\]
where each ideal is $*$-irreducible. Note also that $\text{ord}_{\mathfrak{m}}((x^3,y^3,z^3,xy,xz,yz))=2$, $\text{ord}_{\mathfrak{m}}((x^2,y,z))=1$, $\text{ord}_{\mathfrak{m}}((x,y^2,z))=1$, and $\text{ord}_{\mathfrak{m}}((x,y,z^2))=1$.

This example also shows how prime ideals may not be prime elements. We have that the maximal ideal, $\mathfrak{m} = (x,y,z)$ divides the right-hand side product, call these ideals $J_1$, $J_2$, and $J_3$, respectively. Then, assuming $\mathfrak{m}$ is prime in the monoid, without loss of generality we examine two cases, $\mathfrak{m}$ divides $J_1*J_2$ and $\mathfrak{m}$ divides $J_3$. For these cases, say $J_1*J_2=\mathfrak{m}*K$ or $J_3=\mathfrak{m}*K'$ for some $K,K'\in ic(A)$. Thus, we have that
\[
\mathfrak{m}*K*J_3 = \mathfrak{m}*(x^3,y^3,z^3,xy,xz,yz)\,\,\textnormal{ or }\,\,J_1*J_2*\mathfrak{m}*K' = \mathfrak{m}*(x^3,y^3,z^3,xy,xz,yz)
\]
Then we can cancel $\mathfrak{m}$ on both sides:
\[
K*J_3 = (x^3,y^3,z^3,xy,xz,yz)\,\,\textnormal{ or }\,\,J_1*J_2*K' = (x^3,y^3,z^3,xy,xz,yz)
\]
yielding a proper factorization of a $*$-irreducible ideal, namely $(x^3,y^3,z^3,xy,xz,yz)$, a contradiction. To verify that this is a proper factoring, note that $K$ is not the identity since $J_3\neq (x^3,y^3,z^3,xy,xz,yz)$. Hence $\mathfrak{m}$ cannot be a prime element in the monoid $ic(A)$.
\end{example}

We also note here that the above example shows that, in general, $ic(A)$ may not be a half-factorial monoid. Here, half-factorial means that the lengths of all factorizations of a given element are equal.

\section{Operations on $A$, Submonoids, and the Grothendieck Group}

Transitioning from the internal properties of $ic(A)$ to investigate the factorization, we introduce how operations on the ring behave, how we can incorporate graded rings, and how we can begin to investigate the Grothendieck group of the monoid of integrally closed ideals. 

\begin{remark}
Let $S\subseteq A$ be a multiplicative closed set. Then 
\[
ic(S^{-1}A)= \{\, S^{-1}I \mid I\in ic(A), I\cap S=\emptyset \}\
\]
since all ideals in $S^{-1}A$ are extended, and $I\cap S=\emptyset$ preserves the ideal containing a nonzerodivisor. The operation $*$ is compatible here since for $I,J\in ic(A)$ we have
\[
\overline{(S^{-1}I)(S^{-1}J)}=\overline{S^{-1}IJ}=S^{-1}\overline{IJ}.
\]
\end{remark}

\begin{proposition}
Let $B$ be a faithfully flat extension of $A$, then $ic(A)$ embeds in $ic(B)$
\end{proposition}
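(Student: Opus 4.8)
The plan is to define a map $\phi\colon ic(A)\to ic(B)$ by $\phi(I)=\overline{IB}$ and to show it is an injective monoid homomorphism; here I take $B$ to be Noetherian so that $ic(B)$ is defined in the sense of \cref{icAdef} (this is the situation in the intended applications, e.g. completions or polynomial extensions). First I would check $\phi$ lands in $ic(B)$. The ideal $\overline{IB}$ is integrally closed by definition, and if $a\in I$ is a nonzerodivisor of $A$ then multiplication by $a$ is injective on $A$, hence, tensoring with the flat $A$-module $B$, injective on $B$; thus $a\in IB\subseteq\overline{IB}$ is a nonzerodivisor of $B$ and $\phi(I)\in ic(B)$. (Only flatness is used so far.)

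Next, $\phi$ is a monoid homomorphism. Clearly $\phi((1))=\overline{B}=(1)$. For products, the key elementary observation is that $\overline{I}\,B\subseteq\overline{IB}$: an integral equation $x^n+a_1x^{n-1}+\dots+a_n=0$ with $a_i\in I^i$ maps under $A\to B$ to an integral equation for the image of $x$ with coefficients in $(IB)^i$. Taking integral closures gives $\overline{\overline{I}\,B}=\overline{IB}$. Combining this with the idempotency and associativity of integral closure from \cite[Exercise 1.1]{HSInt},
\[
\phi(I*J)=\phi(\overline{IJ})=\overline{\overline{IJ}\,B}=\overline{IJB}=\overline{\overline{IB}\cdot\overline{JB}}=\phi(I)*\phi(J).
\]

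For injectivity, since every element of $ic(A)$ is an integrally closed ideal, it suffices to prove the contraction formula $\overline{IB}\cap A=\overline{I}$ for an arbitrary ideal $I$ of $A$: then $\overline{IB}=\overline{JB}$ forces $I=\overline{I}=\overline{IB}\cap A=\overline{JB}\cap A=\overline{J}=J$. The inclusion $\overline{I}\subseteq\overline{IB}\cap A$ is immediate from $\overline{I}\,B\subseteq\overline{IB}$. For the reverse inclusion I would pass to Rees algebras: recall that $x\in\overline{I}$ if and only if $xt$ is integral over $A[It]$ inside $A[t]$ (c.f. \cite[Proposition 5.2.1]{HSInt}), equivalently $A[It][xt]=A[(I,x)t]$ is a finitely generated $A[It]$-module, since it is generated by $xt$ as an $A[It]$-algebra. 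Because $B$ is flat over $A$ and these algebras are graded with $n$-th piece $I^nt^n$, resp. $(I,x)^nt^n$, base change along $A\to B$ identifies $A[It]\otimes_A B=B[IBt]$ and $A[(I,x)t]\otimes_A B=B[(IB,x)t]$ as graded subrings of $B[t]$. Base change of a faithfully flat ring map is faithfully flat, so $B[IBt]$ is faithfully flat over $A[It]$. Now if $x\in\overline{IB}$, then $B[(IB,x)t]$ is a finitely generated $B[IBt]$-module; choosing finitely many $B[IBt]$-module generators, writing each as a finite sum of pure tensors, and letting $M_0\subseteq A[(I,x)t]$ be the $A[It]$-submodule generated by the resulting elements of $A[(I,x)t]$, one gets (flatness, then right-exactness) that $M_0\otimes_A B=A[(I,x)t]\otimes_A B$, so the cokernel $A[(I,x)t]/M_0$ dies after the faithfully flat base change and hence vanishes; thus $A[(I,x)t]$ is finitely generated over $A[It]$, i.e. $x\in\overline{I}$. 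This establishes $\overline{IB}\cap A=\overline{I}$ and therefore the injectivity of $\phi$.

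The main obstacle is this final descent step: one must (i) correctly identify the Rees algebra of $IB$ over $B$ with the base change along $A\to B$ of the Rees algebra of $I$ over $A$, using flatness together with the graded structure, and (ii) descend finite generation of a module along the faithfully flat extension $A[It]\to B[IBt]$. It is precisely here that \emph{faithfulness} of the flat extension is needed: flatness alone gives the well-definedness of $\phi$ and the homomorphism property, while faithfulness is exactly what forces $\overline{IB}\cap A=\overline{I}$, and hence the embedding. (Note also that one cannot shortcut this with a naive ``integral closure commutes with faithfully flat base change'' statement, as $\overline{I}\,B=\overline{IB}$ can fail; only the contraction formula, which is weaker, holds in this generality.)
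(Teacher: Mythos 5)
Your proof is correct and follows essentially the same route as the paper: the same map $I\mapsto\overline{IB}$, the same verification of the homomorphism property, and injectivity via the contraction $\overline{IB}\cap A=\overline{I}$. The only difference is that you prove this contraction formula in full (via Rees algebras and faithfully flat descent of module-finiteness), where the paper simply invokes faithful flatness for it.
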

\begin{proof}
For each $I\in ic(A)$ we'll consider $\overline{IB}\in ic(B)$. Then as $B$ is a faithfully flat extension, we still have that $\overline{IB}$ contains a nonzerodivisor. We also have 
\[
\overline{IJB}=\overline{(IB)(JB)}=\overline{(IB)}*\overline{(JB)}.
\]
It remains to check that $\overline{IB}=\overline{JB}$ implies $I=J$. Indeed, as $B$ is a faithfully flat extension
\[
J=\overline{JB}\cap A=\overline{IB}\cap A =\overline{I}=I
\]
finishing the proof.
\end{proof}

\begin{definition}
If $A$ is a $\mathbb{N}^d\times\mathbb{Z}^e$ graded ring, then as the product of homogeneous ideals is homogeneous, and in this setting so is the integral closure, we can recognize the subset of homogeneous ideals in $ic(A)$ as a submonoid of $ic(A)$. Call this submonoid $ic(A)_{hom}$
\end{definition}
\begin{remark}
The theorems above about $ic(A)$ hold for $ic(A)_{hom}$ too. Hence $ic(A)_{hom}$ is a commutative, torsion-free, cancellative, reduced, atomic monoid. That is, the factorization into $*$-irreducible ideals now becomes a factorization into homogeneous $*$-irreducible ideals.
\end{remark}

\begin{definition}
Let $R=k[x_1,\dots,x_d]$ or $k[[x_1,\dots,x_d]]$ for some field $k$. Then similarly to before we can define $ic(R)_{mon}$ to be the subset of monomial ideals in $ic(R)$. 
\end{definition}
\begin{remark}
Since monomial ideals are homogeneous ideals in the $\mathbb{N}^n$ grading of $R$, $ic(R)_{mon}=ic(R)_{hom}$. So we can $*$-factor integrally closed monomial ideals into integrally closed $*$-irreducible monomial ideals.
\end{remark}

\begin{remark}
Using the Grothendieck construction, cancellative monoids can be naturally embedded into a group, called the Grothendieck group. For $ic(A)$ we will call this group $g(ic(A))$. The elements of the Grothendieck group are equivalence classes of pairs $[I,J]\in g(ic(A))$ where $I,J\in ic(A)$ and $[I,J]=[I',J']$ when $I*J'=I'*J$ (i.e. $\overline{IJ'}=\overline{I'J}$). The operation on $g(ic(A))$ is the obvious $[I,J]*[I',J']=[I*I',J*J']$ which makes the inverse of $[I,J]$ the pair $[J,I]$, and the identity is clearly $[(1),(1)]$. We will write these pairs as formal fractions (or formal differences if the monoid was written additively).
\end{remark}

\begin{definition}
For commutative monoids $M,N$ with monoid map $f:M\rightarrow N$, define the map on the Grothendieck groups $G(f):G(M)\rightarrow G(N)$ to be $G(f)(\frac{m_1}{m_2})=\frac{f(m_1)}{f(m_2)}$. 
\end{definition}

From this, it is easy to see that $G(-)$ is a functor from the category of commutative monoids to the category of Abelian groups. Indeed, when the monoids are cancellative, the Grothendieck construction is a functor which is left exact and faithful since cancellative monoids embed in their Grothendieck groups.

\begin{proposition}
\label{pcAprop}
The set $pc(A)=\{\,\overline{(f)} \mid f\in A\, \,\text{and}\, f\, \text{is a nonzerodivisor}\}$ is a submonoid of $ic(A)$
\end{proposition}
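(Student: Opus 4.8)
The plan is to verify directly the three defining requirements of a submonoid: that $pc(A)$ is a subset of $ic(A)$, that it contains the identity of $ic(A)$, and that it is closed under the operation $*$.

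First I would check containment $pc(A)\subseteq ic(A)$. For a nonzerodivisor $f\in A$, the ideal $\overline{(f)}$ is integrally closed because $\overline{\overline{J}}=\overline{J}$ for every ideal $J$, and it contains the nonzerodivisor $f$; hence $\overline{(f)}\in ic(A)$. For the identity, I would observe that $1$ is a nonzerodivisor and $(1)=\overline{(1)}$, so $(1)\in pc(A)$, and $(1)$ is the identity of $ic(A)$ by \cref{cmonthm}.

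The main step is closure under $*$. Given nonzerodivisors $f,g\in A$, their product $fg$ is again a nonzerodivisor, so $\overline{(fg)}\in pc(A)$; it then suffices to show $\overline{(f)}*\overline{(g)}=\overline{(fg)}$. Unwinding the definition, $\overline{(f)}*\overline{(g)}=\overline{\,\overline{(f)}\,\overline{(g)}\,}$, and by \cite[Exercise 1.1]{HSInt} (using that $A$ is Noetherian, exactly as in the proof of \cref{cmonthm}) one has $\overline{\,\overline{I}\,\overline{J}\,}=\overline{IJ}$ for all ideals $I,J$; applying this with $I=(f)$, $J=(g)$ and using $(f)(g)=(fg)$ gives $\overline{(f)}*\overline{(g)}=\overline{(fg)}\in pc(A)$.

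Once these three points are in place, $pc(A)$ is a submonoid of $ic(A)$. I do not expect any genuine obstacle here; the only point requiring care is the invocation of the Noetherian hypothesis when identifying $\overline{\,\overline{I}\,\overline{J}\,}$ with $\overline{IJ}$, which is the same subtlety already handled in \cref{cmonthm}.
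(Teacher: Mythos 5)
Your proof is correct and follows essentially the same route as the paper: the key computation in both is $\overline{(f)}*\overline{(g)}=\overline{(f)(g)}=\overline{(fg)}\in pc(A)$, together with $(1)\in pc(A)$. Your additional explicit checks (membership in $ic(A)$ and the identity) are routine elaborations of what the paper leaves implicit.
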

\begin{proof}
Clearly $(1)\in pc(A)$. Let nonzero $f,g\in A$, then 
\[
\overline{(f)}*\overline{(g)}=\overline{(f)(g)}=\overline{(fg)}\in pc(A).
\]
\end{proof}

\begin{remark}
Under the Grothendieck construction on $pc(A)$, we have that $G(pc(A))$ becomes a subgroup of $G(ic(A))$ under a similar argument to \cref{pcAprop}. We'll call this subgroup $G(pc(A))$, and denote $\widetilde{g}(A)=G(ic(A))/G(pc(A))$.
\end{remark}

\begin{theorem}\label{equivMon}
Define a relation $\sim$ on $ic(A)$ by $I\sim J$ if and only if there are nonzero $f,g\in A$ such that $\overline{fI}=\overline{gJ}$ for $I,J\in ic(A)$. Then $\sim$ is a congruence and $ic(A)/\mathord\sim$ is a cancellative monoid.
\end{theorem}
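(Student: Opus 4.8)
The plan is to recognize $\sim$ as the kernel congruence of the natural monoid homomorphism $\psi\colon ic(A)\to\widetilde{g}(A)$ sending $I$ to the class of the formal fraction $I/(1)$, i.e.\ the composite of the embedding $ic(A)\hookrightarrow G(ic(A))$ with the quotient $G(ic(A))\to G(ic(A))/G(pc(A))=\widetilde{g}(A)$. The first and main step is to check that $\psi(I)=\psi(J)$ is precisely the condition defining $I\sim J$. Unwinding the quotient, $\psi(I)=\psi(J)$ means $I/J\in G(pc(A))$, and since $G(pc(A))$ consists of the fractions $\overline{(f)}/\overline{(g)}$ with $f,g$ nonzerodivisors, this says $I/J=\overline{(f)}/\overline{(g)}$ for some such $f,g$, i.e.\ $I*\overline{(g)}=J*\overline{(f)}$ in $ic(A)$, i.e.\ $\overline{gI}=\overline{fJ}$. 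Here one reads the elements $f,g$ in the statement as nonzerodivisors, which is consistent with the definition of $pc(A)$ and is the case relevant to the rest of the paper (for $A$ a domain it is no restriction); with that reading, $I\sim J\iff\psi(I)=\psi(J)$.

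Granting this identification, the congruence assertion is formal: a relation of the form ``$\psi(a)=\psi(b)$'' for a monoid homomorphism is an equivalence relation, and it is compatible with $*$ because $I\sim I'$ and $J\sim J'$ force $\psi(I*J)=\psi(I)\psi(J)=\psi(I')\psi(J')=\psi(I'*J')$. If instead one prefers a hands-on verification, reflexivity takes $f=g=1$, symmetry is obvious, and transitivity together with compatibility with $*$ follow by multiplying the defining equations and repeatedly applying the identity $\overline{M\overline{N}}=\overline{MN}$ recalled in the proof of \cref{cmonthm}; the only place the nonzerodivisor hypothesis is used is to guarantee that the auxiliary elements obtained by multiplication stay nonzerodivisors, so that $\overline{fI}$ remains in $ic(A)$.

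Finally, by the homomorphism theorem for monoids, $ic(A)/\mathord\sim$ is isomorphic to the submonoid $\psi(ic(A))$ of the abelian group $\widetilde{g}(A)$, and every submonoid of a group is cancellative; hence $ic(A)/\mathord\sim$ is cancellative. The step I expect to be the real obstacle is the identification in the first paragraph — specifically, verifying that $\overline{fI}=\overline{gJ}$ for nonzerodivisors $f,g$ is exactly membership of $I/J$ in $G(pc(A))$. This rests on the cancellativity of $ic(A)$ from \cref{cmonthm} and on the observation that $\overline{fI}\in ic(A)$ whenever $f$ is a nonzerodivisor, since then $fI$ contains the nonzerodivisor $fu$ for any nonzerodivisor $u\in I$. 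Everything downstream of that identification is bookkeeping.
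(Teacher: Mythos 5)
Your proof is correct, but it runs along a genuinely different track than the paper's. The paper verifies everything by hand: it checks transitivity of $\sim$, compatibility with $*$, and cancellativity of the quotient one at a time, each time manipulating chains of integral closures via the identity $\overline{M\overline{N}}=\overline{MN}$ and cancelling in $ic(A)$. You instead recognize $\sim$ as the kernel congruence of the composite $\psi\colon ic(A)\hookrightarrow G(ic(A))\to G(ic(A))/G(pc(A))=\widetilde{g}(A)$, so that ``congruence'' is automatic from $\psi$ being a monoid homomorphism and ``cancellative'' is automatic from the first isomorphism theorem plus the fact that any submonoid of a group is cancellative. Your identification $I\sim J\iff \psi(I)=\psi(J)$ is sound: $[I,J]\in G(pc(A))$ unwinds to $I*\overline{(g)}=J*\overline{(f)}$, i.e.\ $\overline{gI}=\overline{fJ}$, and this uses cancellativity of $ic(A)$ exactly as you say (the Grothendieck equivalence used in the paper's Remark, $[I,J]=[I',J']\iff I*J'=I'*J$, is the cancellative simplification). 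Two small points of contact with the paper are worth flagging. First, both you and the paper quietly upgrade ``nonzero $f,g$'' to ``nonzerodivisors'' --- the paper writes ``as $A$ is a domain'' mid-proof even though the statement imposes no such hypothesis; you make the substitution explicit, which is cleaner and what is needed for $\overline{fI}\in ic(A)$. Second, your route gives \cref{gpisothm} essentially for free: the isomorphism $ic(A)/\mathord\sim\cong\psi(ic(A))$ passes to Grothendieck groups and, since $\psi(ic(A))$ generates $\widetilde{g}(A)$, yields $G(\widetilde{ic}(A))\cong\widetilde{g}(A)$ without the separate computation the paper carries out there. The trade-off is that the paper's proof is self-contained and elementary, whereas yours leans on $\widetilde{g}(A)$ having already been introduced (which, in the paper's ordering, it has been) and on the universal-algebra homomorphism theorem.
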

\begin{proof}
We'll first show that $\sim$ is an equivalence relation. Clearly $\sim$ is reflexive and symmetric. For $I,J,K\in ic(A)$ suppose $I\sim J$ and $J\sim K$. In this case suppose $\overline{fI}=\overline{gJ}$ and $\overline{hJ}=\overline{lK}$ for nonzero $f,g,h,l\in A$. Then by \cite[Exercise 1.1]{HSInt}
\[
\overline{hfI}=\overline{h\overline{fI}}=\overline{h\overline{gJ}}=\overline{hgJ}=\overline{ghJ}=\overline{g\overline{hJ}}=\overline{g\overline{lK}}=\overline{glK}
\]
so that as $A$ is a domain $gl$ and $hf$ are nonzero, we have $I\sim K$. So we now must show that $\sim$ is a congruence, i.e. if $I\sim J$ and $K\sim H$ then $I*K\sim J*H$ for $I,J,K,H\in ic(A)$. Suppose $\overline{fI}=\overline{gJ}$ and $\overline{hK}=\overline{lH}$ for nonzero $f,g,h,l\in A$. Then by \cite[Exercise 1.1]{HSInt} again
\[
\overline{fh\overline{IK}}=\overline{fhIK}=\overline{fI\overline{HK}}=\overline{gJ\overline{lH}}=\overline{glJH}=\overline{gl\overline{JH}}
\]
so that $I*K\sim J*H$. Hence $\sim$ is a congruence. Thus, $ic(A)/\mathord\sim$ is a monoid and it remains to show that it is cancellative.
Denote the equivalence class of $I\in ic(A)$ in $ic(A)/\sim$ by $[I]$. Suppose that $[I]*[K]=[J]*[K]$ for $I,J,K\in ic(A)$, that is to say $[\overline{IK}]=[\overline{JK}]$, i.e. $\overline{IK}\sim \overline{JK}$. So there are nonzero $f,g\in A$ with $\overline{f\overline{IK}}=\overline{g\overline{JK}}$ and by \cite[Exercise 1.1]{HSInt} again we have
\[
\overline{fI}*K=\overline{fI\overline{K}}=\overline{f\overline{IK}}=\overline{g\overline{JK}}=\overline{gJ\overline{K}}=\overline{gJ}*K
\]
so that as $ic(A)$ is cancellative we have $\overline{fI}=\overline{gJ}$, that is $I\sim J$ or $[I]=[J]$. Hence $ic(A)/\mathord\sim$ is cancellative.
\end{proof}

\begin{definition}
With the above congruence, we'll call $ic(A)/\mathord\sim = \widetilde{ic}(A)$.
\end{definition}

\begin{remark}
If $A$ is normal, then $\overline{fI}=f\overline{I}$ for nonzero $f\in A$. Hence $I\sim J$ if and only if $fI=gJ$, that is, if and only if $I\cong J$ as $A$-modules. So $ic(A)/\mathord\sim$ becomes isomorphism classes of integrally closed ideals.
\end{remark}


\begin{remark}
If $A$ is a (semi) local normal domain then by a theorem of Sally \cite{sal75} normal ideals (i.e. those ideals with $\overline{I^n}=I$ for all $n\in\N$) can't be weak-torsion in $\widetilde{ic}(A)$ since for positive height ideals $\mu(I^n)=1$ implies $\mu(I)=1$, i.e. that $I$ is principal.
\end{remark}

\begin{proposition}
\label{gpisothm}
As groups we have $\widetilde{g}(A)\cong G(\widetilde{ic}(A))$, where $G(\widetilde{ic}(A))$ is the Grothendieck group of $\widetilde{ic}(A)$.
\end{proposition}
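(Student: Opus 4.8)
The plan is to apply the first isomorphism theorem to the group homomorphism induced by the quotient monoid map $q\colon ic(A)\to\widetilde{ic}(A)=ic(A)/\mathord\sim$, showing that its image is all of $G(\widetilde{ic}(A))$ and that its kernel is exactly $G(pc(A))$.

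First I would form the induced map $G(q)\colon G(ic(A))\to G(\widetilde{ic}(A))$, which by the definition of $G$ on morphisms sends a formal fraction $I/J$ to $[I]/[J]$. Since $q$ is surjective and, by \cref{equivMon}, $\widetilde{ic}(A)$ is cancellative (so that $G(\widetilde{ic}(A))$ is literally the group of formal fractions $[I]/[J]$ with $I,J\in ic(A)$), the homomorphism $G(q)$ is surjective.

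The heart of the argument is the identity $\ker(G(q))=G(pc(A))$, which I would prove by two inclusions. For ``$\supseteq$'', observe that any nonzerodivisor $f$ satisfies $\overline{(f)}\sim(1)$: taking the witnesses $1$ and $f$ in the definition of $\sim$ gives $\overline{1\cdot\overline{(f)}}=\overline{(f)}=\overline{f\cdot(1)}$. Hence $q(\overline{(f)})=[(1)]$ is the identity of $\widetilde{ic}(A)$, so $G(q)$ sends every $\overline{(f)}/\overline{(g)}$ to the identity; as these fractions exhaust $G(pc(A))$ (a subgroup of $G(ic(A))$ by the remark following \cref{pcAprop}), we get $G(pc(A))\subseteq\ker(G(q))$. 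For ``$\subseteq$'', let $I/J\in\ker(G(q))$, so $[I]/[J]$ is the identity of $G(\widetilde{ic}(A))$; since $\widetilde{ic}(A)$ is cancellative it embeds in its Grothendieck group, whence $[I]=[J]$ already in $\widetilde{ic}(A)$, i.e. $I\sim J$. Unwinding the definition of $\sim$, there are nonzerodivisors $f,g$ with $\overline{fI}=\overline{gJ}$; using $\overline{(f)}*I=\overline{\overline{(f)}I}=\overline{fI}$ and likewise for $g,J$, this reads $\overline{(f)}*I=\overline{(g)}*J$ in $ic(A)$, so $I/J=\overline{(g)}/\overline{(f)}\in G(pc(A))$.

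Combining these, the first isomorphism theorem yields
\[
\widetilde{g}(A)=G(ic(A))/G(pc(A))=G(ic(A))/\ker(G(q))\cong G(\widetilde{ic}(A)).
\]
The only step requiring genuine care is the kernel computation, and within it the implication ``$[I]=[J]$ in $G(\widetilde{ic}(A))$ $\Rightarrow$ $I\sim J$'': this is precisely where cancellativity of $\widetilde{ic}(A)$, established in \cref{equivMon}, is indispensable, since without it the embedding $\widetilde{ic}(A)\hookrightarrow G(\widetilde{ic}(A))$ need not hold and one could not descend an equality of fractions to an equality of monoid elements.
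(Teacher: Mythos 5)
Your proposal is correct and is essentially the paper's own argument repackaged through the first isomorphism theorem: the paper directly defines the induced map $[I,J]*pc(A)\mapsto[[I],[J]]$ on the quotient and verifies well-definedness, surjectivity, and injectivity, and those verifications are exactly your two kernel inclusions $G(pc(A))\subseteq\ker(G(q))$ and $\ker(G(q))\subseteq G(pc(A))$ together with surjectivity of $G(q)$. The cancellativity of $\widetilde{ic}(A)$ from \cref{equivMon} plays the same role in both write-ups (translating equality of formal fractions back to the relation $\sim$), so no new idea is needed and none is missing.
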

\begin{proof}
Define $\phi:\widetilde{g}(A)\to G(\widetilde{ic}(A))$ by $[I,J]*pc(A)\mapsto [[I],[J]]$ for $I,J\in ic(A)$. Let $[I,J]*pc(A)=[K,H]*pc(A)$, that is $[I,J]*[K,H]^{-1}=[\overline{IH},\overline{JK}]\in pc(A)$ which furthermore means that there are nonzero $f,g\in A$ such that $[\overline{IH},\overline{JK}]=[\overline{(f)},\overline{(g)}]$, i.e. $\overline{gIH}=\overline{fJK}$. But this implies that $[\overline{IH}]=[\overline{JK}]$, that is $[I]*[H]=[K]*[J]$ so that $[[I],[J]]=[[K],[H]]$. Hence, $\phi$ is well-defined.

We have that 
\begin{equation}
\begin{split}
\phi ([I,J]*pc(A))*\phi ([K,H]*pc(A)) \\ 
=[[I],[J]]*[[K],[H]] \\ 
=[[\overline{IK}],[\overline{JH}]] \\ 
=\phi ([\overline{IK},\overline{JH}]*pc(A)) \\ 
=\phi ((([I,J]*pc(A))*([K,H]*pc(A))) \\
\end{split}
\end{equation}
for $I,J,K,H\in ic(A)$ so that $\phi$ is a group homomorphism. It's clear that $\phi$ is also onto. Now suppose that $\phi ([I,J]*pc(A))=\phi ([K,H]*pc(A))$, then $[\overline{IH}]=[\overline{KJ}]$ so that for nonzero $f,g\in A$ we have $\overline{gIH}=\overline{fKJ}$ which says that $[\overline{IH},\overline{KJ}]=[\overline{(f)},\overline{(g)}]\in pc(A)$. Hence $[I,J]*pc(A)=[K,H]*pc(A)$ so that $\phi$ is injective, and thus an isomorphism.
\end{proof}

As we will see in the following section, we can restrict $ic(A)$ and $pc(A)$ to submonoids such as integrally closed monomial ideals and monomials, respectively. With this restriction, we can use the same proof above to understand the resulting group. 

\begin{remark}
A cancellative monoid $M$ is torsion-free if and only if the Grothen-dieck group of that monoid, $G(M)$ is torsion-free. Indeed this follows from the fact that $a^n=b^n$ in $M$ if and only if $(\frac{a}{b})^n=1$ in $G(M)$. Thus, by \cref{torsionF}, we have that $G(ic(A))$ is a torsion-free group. We can also see that by \cref{gpisothm}, $\widetilde{g}(A)$ is torsion-free if and only if $\widetilde{ic}(A)$ is torsion-free as well.
\end{remark}

\begin{question}
Do there exist $I,J\in \widetilde{ic}(A)$ such that there are nonzero $f,g\in A$ and $n\in \mathbb{N}$ with $\overline{fI^n}=\overline{gJ^n}$ and $[I]\neq [J]$? In the normal ring setting, are there ideals $I,J\in ic(A)$ such that $\overline{I^n}\cong\overline{J^n}$ for some $n\in\N$ where $I$ is not isomorphic to $J$ as $A$-modules?
\end{question}


\section{Connections to Convex Geometry and The Polytope Group}

We now consider $R=k[x_1,\dots,x_d]$ or $k[[x_1,\dots,x_d]]$ for some field $k$. Here we focus on the submonoid of integrally closed monomial ideals and use the already strong connections with convex geometry via the Newton Polyhedrons.

\begin{definition}
Let $P(\Z)_u$ be the monoid of finitely supported closed convex sets in $\R^d$ with integer vertices under Minkowski addition. Here finitely supported closed convex sets means each convex set is an intersection of finitely many closed half-spaces.
\end{definition}

Here we use the subscript $u$ to denote these polyhedrons may be unbounded, though that is not necessarily the case.

\begin{proposition}\label{newtonHom}
There is a monoid injection $ic(R)_{mon}$ into $P(\Z)_u$ via taking Newton Polyhedrons.
\end{proposition}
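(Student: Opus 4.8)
The plan is to define the map explicitly and verify the three required properties: it lands in $P(\Z)_u$, it is a monoid homomorphism, and it is injective. First I would send each integrally closed monomial ideal $I \in ic(R)_{mon}$ to its Newton Polyhedron $\NP(I) = \textnormal{conv}(E(I)) \subseteq \R^d$. The target is well-defined because $\NP(I)$ is the convex hull of $E(I) = \{\ua \in \N^d \mid \xa \in I\}$, a set which satisfies $E(I) + \N^d \subseteq E(I)$ (absorption of the ideal under multiplication by monomials), so $\NP(I) = \NP(I) + \R_+^d$ is an unbounded polyhedron; it is cut out by finitely many half-spaces since $I$ is finitely generated (the minimal monomial generators determine finitely many bounding facets), and its vertices lie in $\N^d \subseteq \Z^d$. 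The fact that $I$ contains a nonzerodivisor ensures $I \neq 0$, so $\NP(I)$ is nonempty.

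Next I would verify the homomorphism property: $\NP(\overline{IJ}) = \NP(I) + \NP(J)$. The key input is the standard identity from \cite[Proposition 1.4.6]{HSInt} quoted in the preliminaries, namely $E(\overline{I}) = \NP(I) \cap \N^d$, which gives $\NP(\overline{I}) = \NP(I)$ (the convex hull of the lattice points of a rational polyhedron closed under $+\R_+^d$ recovers the polyhedron). Since the exponent set of a product satisfies $E(IJ) = E(I) + E(J)$, taking convex hulls and using that $\textnormal{conv}(S+T) = \textnormal{conv}(S) + \textnormal{conv}(T)$ yields $\NP(IJ) = \NP(I) + \NP(J)$, and then $\NP(\overline{IJ}) = \NP(IJ) = \NP(I) + \NP(J)$. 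This also shows the map is well-defined on $ic(R)_{mon}$ as a monoid (respecting the identity, since $\NP(R) = \R_+^d$ is the Minkowski-addition identity in $P(\Z)_u$).

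Finally, injectivity: if $\NP(I) = \NP(J)$, then intersecting with $\N^d$ gives $E(\overline{I}) = E(\overline{J})$, hence $E(I) = E(J)$ since $I, J$ are already integrally closed, and a monomial ideal is determined by its exponent set, so $I = J$.

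I expect the main obstacle to be bookkeeping around the passage between polyhedra and their lattice points — specifically, confirming that $\NP(\overline{I}) = \NP(I)$ and that the convex hull of $E(I)$ is genuinely a finitely-supported (finite intersection of half-spaces) unbounded polyhedron, which is where one must use Noetherianity and the structure of monomial ideals rather than appeal to a slick formal argument; the homomorphism and injectivity steps are then essentially formal consequences of the $E(-)$ calculus and \cite[Proposition 1.4.6]{HSInt}.
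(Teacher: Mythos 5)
Your proposal is correct and follows the same core computation as the paper: both rest on $E(IJ)=E(I)+E(J)$ together with $\textnormal{conv}(S+T)=\textnormal{conv}(S)+\textnormal{conv}(T)$ to get $\NP(\overline{IJ})=\NP(IJ)=\NP(I)+\NP(J)$ (you cite the exponent-set identity as standard, the paper proves it in one line via the monomial splitting of a product). Where you genuinely diverge is injectivity: you intersect with $\N^d$ and use $E(\overline{I})=\NP(I)\cap\N^d$ to conclude that an integrally closed monomial ideal is determined by its Newton polyhedron, whereas the paper argues that equal polyhedra have equal vertex sets, the vertex monomials generate a reduction of the ideal, and hence $I=\overline{I}=\overline{J}=J$. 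Your route is more direct and reuses only the lattice-point description already quoted in the preliminaries; the paper's route trades that for the (also standard) fact that the ideal generated by the vertex monomials is a reduction. Your extra care about why $\NP(I)$ lies in $P(\Z)_u$ (polytope of generators plus $\R_+^d$, hence finitely many half-spaces, integer vertices) fills in what the paper dismisses as ``known,'' which is fine. One small slip: $\R_+^d=\NP(R)$ is \emph{not} the Minkowski identity of $P(\Z)_u$ (that is the single point $\{0\}$); it acts as an identity only on sets stable under adding $\R_+^d$, so strictly speaking $\NP$ is an injective semigroup homomorphism whose image is a monoid with identity $\R_+^d$. This looseness is already implicit in the statement being proved, so it does not affect the substance, but you should not assert that $\R_+^d$ is the identity of all of $P(\Z)_u$.
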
    
\begin{proof}
For $I\in ic(R)_{mon}$ we note that it is known that $\textnormal{NP}(I)\in P(\Z)_u$. Furthermore, we have $\textnormal{NP}(I*J)=\textnormal{NP}(\overline{IJ})=\textnormal{NP}(IJ)=\textnormal{NP}(I)+\textnormal{NP}(J)$. To see this, as $\textnormal{NP}(I)=\textnormal{conv}(E(I))$ where $E(I)$ denotes the exponent set of $I$, it is enough to show that $E(IJ)=E(I)+E(J)$ since $\textnormal{conv}(S+P)=\textnormal{conv}(S)+\textnormal{conv}(P)$ for any $S,P\subseteq\R^d$ \cite[Theorem 1.1.2]{sch14}. It is clear that $E(I)+E(J)\subseteq E(IJ)$, for the converse, if $\ug\in E(IJ)$ then $\ux^{\ug}\in IJ$ so that (as it is monomial) we have $\ux^{\ug}=\ux^{\ua}\ux^{\ub}$ for some $\ux^{\ua}\in I$ and $\ux^{\ub}\in J$, yielding equality. Hence
\[
\textnormal{NP}(\overline{IJ})=\textnormal{NP}(IJ)=\textnormal{conv}(E(IJ))=\textnormal{conv}(E(I))+\textnormal{conv}(E(J))=\textnormal{NP}(I)+\textnormal{NP}(J)
\]
so that the map $\textnormal{NP}(-)$ is a monoid map. Now, if $\textnormal{NP}(I)=\textnormal{NP}(J)$, then the vertices must be equal, hence the ideals generated by the vertices must be equal. But this ideal is always a reduction of the original ideal, hence $I=\overline{I}=\overline{J}=J$ finishes the injection.
\end{proof}


Restricting to the bounded elements of $P(\Z)_u$ we have the monoid of polytopes under Minkowski addition, which is cancellative, and we can embed into its Grothendieck group.

\begin{definition}
From \cite[Section 2]{funke} let $\mathcal{P}(\Z)_t$ be the \textit{integral polytope group}. That is, take the monoid of all bounded closed convex sets with vertices lying in $\Z^d$ under Minkowski addition, denoted $P(\Z)$. The monoid is cancellative, embed it inside of its Grothendieck group, denoted $\mathcal{P}(\Z)$. Placing the equivalence relation induced by setting polytopes that are translates of each other as equivalent we obtain the group $\mathcal{P}(\Z)_t$
\end{definition}

\begin{remark}
As polytopes are bounded, for each polytope $[P]\in\mathcal{P}(\Z)_t$ where $[P]$ denotes the translation equivalence class, there is a polytope $[P_{\N}]$ with vertices in $\N$ obtained by translating $P$ into the positive orthant so that $[P]=[P_{\N}]$.
\end{remark}

\begin{definition}
Place an equivalence relation $~$ on $ic(R)_{mon}$ via multiplication by monomials, i.e. $I~J$ if and only if there are monomials $\ux^{\ua},\ux^{\ub}\in R$ such that $\ux^{\ua}I=\ux^{\ub}J$. Using the same argument as \cref{equivMon}, $~$ indeed is an equivalence relation, and under this equivalence relation, the monoid $\widetilde{ic}(R)$ is again a cancellative monoid. Denote the Grothendieck group as $\widetilde{g}(R)$, as above.
\end{definition}

\begin{remark}
From \cref{newtonHom} we can see that the equivalence relation by monomial multiplication is an equivalence on the Newton Polyhedra via translation. That is, two ideals in $ic(R)_{mon}$ will be equivalent if and only if their Newton Polyhedra are translates of one another.
\end{remark}

Now that we have accounted for translation invariance in the monoid of integrally closed ideals, we can create a well-defined surjective map from the integral polytope group.

\begin{theorem}\label{surjectiveGroup}
There is a surjective group homomorphism $\phi:\mathcal{P}(\Z)_t\to\widetilde{g}(R)$ induced by the map
\[
\phi([P])=[(\ux^{\ua}\,|\,\ua\in(P_{\N}+\R_+^d)\cap\N^d)]
\]
\end{theorem}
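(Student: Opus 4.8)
The plan is to construct the map first at the level of monoids and then apply the Grothendieck functor $G(-)$. For $P\in P(\Z)$, let $P_\N$ be any translate of $P$ into the positive orthant $\R_+^d$, set $I_P=(\ux^{\ua}\mid\ua\in(P_\N+\R_+^d)\cap\N^d)$, and define $\theta\colon P(\Z)\to\widetilde{ic}(R)$ by $\theta(P)=[I_P]$, the class of $I_P$ under monomial equivalence. I will verify in turn: (1) $I_P\in ic(R)_{mon}$, and $[I_P]$ is independent of the chosen translate $P_\N$; (2) $\theta$ is a monoid homomorphism; (3) applying $G(-)$ (a functor, as noted in the text) and the monomial case of \cref{gpisothm} gives a group homomorphism $G(\theta)\colon\mathcal P(\Z)=G(P(\Z))\to G(\widetilde{ic}(R))\cong\widetilde g(R)$, which sends every difference of translates to the identity and hence descends to $\phi\colon\mathcal P(\Z)_t\to\widetilde g(R)$; (4) $\phi$ is surjective. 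On the class of a polytope, $\phi([P])$ is then the class of $I_{P_\N}$, which is exactly the stated formula.

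The content of (1) is that $I_P$ is integrally closed. Its exponent set is $E(I_P)=(P_\N+\R_+^d)\cap\N^d$, so by \cite[Proposition 1.4.6]{HSInt} it suffices to show $\textnormal{conv}\bigl((P_\N+\R_+^d)\cap\N^d\bigr)=P_\N+\R_+^d$ (equivalently $\NP(I_P)=P_\N+\R_+^d$). The inclusion $\subseteq$ is clear since $P_\N+\R_+^d$ is convex; for $\supseteq$, a point of $P_\N+\R_+^d$ is a convex combination of the integer vertices of $P_\N$ plus a vector of $\R_+^d$, and writing that vector as a convex combination of lattice points — possible because $\R_+^d=\textnormal{conv}(\N^d)$ — and distributing exhibits the point as a convex combination of lattice points of $P_\N+\R_+^d$. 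Since monomials are nonzerodivisors in $R$ and $I_P\neq 0$, $I_P$ contains a nonzerodivisor, so $I_P\in ic(R)_{mon}$. If $P_\N,P_\N'$ are two translates of $P$ inside $\R_+^d$, the Newton polyhedra $P_\N+\R_+^d$ and $P_\N'+\R_+^d$ are translates of one another, so $I_{P_\N}$ and $I_{P_\N'}$ are monomial-equivalent by the Remark following \cref{newtonHom}; this gives the well-definedness in (1), and the same argument shows $\theta(P)=\theta(Q)$ for any translates $P,Q$.

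For (2), \cref{newtonHom} gives $\NP(I_P*I_Q)=\NP(I_P)+\NP(I_Q)=(P_\N+\R_+^d)+(Q_\N+\R_+^d)=(P_\N+Q_\N)+\R_+^d$, while $\NP(I_{P+Q})=(P+Q)_\N+\R_+^d$. As $(P+Q)_\N$ and $P_\N+Q_\N$ are both translates of $P+Q$, these Newton polyhedra are translates, so the integrally closed monomial ideals $I_{P+Q}$ and $\overline{I_PI_Q}=I_P*I_Q$ are monomial-equivalent, i.e. $\theta(P+Q)=\theta(P)*\theta(Q)$; also $\theta(\{0\})=[(1)]$. Step (3) is then formal: $G(\theta)$ exists by functoriality, and for translates $P,Q$ the value $G(\theta)([P]-[Q])=[I_P][I_Q]^{-1}$ is the identity of $\widetilde g(R)$ by (1); since the differences of translates generate the subgroup defining $\mathcal P(\Z)_t$, the map descends to $\phi$.

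For surjectivity, $\widetilde g(R)\cong G(\widetilde{ic}(R))$ is generated as a group by the classes $[I]$ with $I\in ic(R)_{mon}$, so it is enough that each such $[I]$ lies in the image. Now $\NP(I)=\textnormal{conv}(\{\ub_i\})+\R_+^d$ where $\ux^{\ub_1},\dots,\ux^{\ub_m}$ is a monomial generating set of $I$; its vertices lie in $E(I)\subseteq\N^d$, so taking $P$ to be the integer polytope spanned by those vertices gives $P+\R_+^d=\NP(I)$ and hence $I_P=\overline I=I$, so $\phi([P])=[I]$. Thus $\phi$ is onto. The step I expect to be the main obstacle is not a single deep fact but keeping the two translation invariances synchronized through the Newton-polyhedron dictionary; the only genuinely geometric input is the convexity identity $\textnormal{conv}\bigl((P_\N+\R_+^d)\cap\N^d\bigr)=P_\N+\R_+^d$ of (1), which both guarantees that the ideals $I_P$ are integrally closed and pins down their Newton polyhedra so that the homomorphism computation in (2) goes through.
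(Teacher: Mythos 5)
Your proposal is correct and follows essentially the same route as the paper's proof: define the map at the monoid level via $P\mapsto[(\ux^{\ua}\mid\ua\in(P_{\N}+\R_+^d)\cap\N^d)]$, check independence of the chosen translate $P_{\N}$, use the additivity of Newton polyhedra from \cref{newtonHom} to get the homomorphism property, and prove surjectivity from $\NP(I)=\textnormal{conv}(\textnormal{gens}(I))+\R_+^d$. Your extra details -- the explicit verification that $\textnormal{conv}\bigl((P_{\N}+\R_+^d)\cap\N^d\bigr)=P_{\N}+\R_+^d$ (which the paper asserts from convexity of $P_{\N}$) and the formal descent through the Grothendieck functor -- are refinements of the same argument rather than a different approach.
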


\begin{proof}
We first show that the map is well-defined. It is enough to check well-definedness at the monoid level. Suppose that $P_{\N}$ and $P_{\N}'$ are two translates of $P$ into $\R_+^d$. Then there are $\ua,\ub\in\N^d$ such that $P_{\N}+\ua=P_{\N}'+\ub$. Clearly we have $\ua+P_{\N}+\R_+^d=\ub+P_{\N}'+\R_+^d$ and furthermore
\begin{equation}\label{translate}
    \ua+((P_{\N}+\R_+^d)\cap\N^d)=\ub+((P_{\N}'+\R_+^d)\cap\N^d).
\end{equation}
Denote $A=(P_{\N}+\R_+^d)\cap\N^d$ and $B=(P_{\N}'+\R_+^d)\cap\N^d$, then let $I_A=(\ux^{\underline{\gamma}}\,|\,\underline{\gamma}\in A)$ and $I_B$ similarly. Then, from \cref{translate} we have $\ux^{\ua}I_A=\ux^{\ub}I_B$. Hence, under the equivalence relation of $\widetilde{g}(R)$ we have $[I_A]=[I_B]$, and so $\phi$ is well-defined.

We will now show that $\phi$ is a homomorphism. Clearly the zero polyhedron equivalence class maps to $[R]$ under $\phi$. Now consider polytopes $P, Q$ and consider $A=(P_{\N}+\R_+^d)\cap\N^d$, $B=(Q_{\N}+\R_+^d)\cap\N^d$, and $I_A=(\ux^{\ua}\,|\,\ua\in A)$ and $I_B$ similarly. Then we have that 
\[
\textnormal{NP}(I_A)=\textnormal{conv}(A)=P_{\N}+R_+^d
\]
since $P_{\N}$ is convex. Hence, as we have $\phi([P])=[I_A]$ and $\phi([Q])=[I_B]$ we see that
\[
\textnormal{NP}(\overline{I_AI_B})=\textnormal{NP}(I_A)+\textnormal{NP}(I_B)=P_{\N}+R_+^d+Q_{\N}+R_+^d=(P_{\N}+Q_{\N})+\R_+^d.
\]
Thus 
\begin{equation}
\begin{split}
    \phi([P+Q])=[(\ux^{\ua}\,|\,\ua\in(P+Q)_{\N}+\R_+^d)\cap\N^d]\\
    =[(\ux^{\ua}\,|\,\ua\in(P_{\N}+Q_{\N}+\R_+^d)\cap\N^d]\\
    =[\overline{I_AI_B}]=[I_A]*[I_B]=\phi([P])*\phi([Q])
\end{split}
\end{equation}
where the second equality follows from the translation invariance and the third follows since the exponent set of an integrally closed ideal comes from the lattice points of the Newton polyhedron and $\textnormal{NP}(\overline{I_AI_B})=(P_{\N}+Q_{\N})+\R_+^d$. Hence, inducing $\phi$ on the whole group, we have the desired homomorphism. 

To show that $\phi$ is surjective, let $[I]\in\widetilde{g}(R)$. Note that $\textnormal{NP}(I)=\textnormal{conv}(\textnormal{gens}(I))+\R_+^d$ so that $[I]=\phi([\textnormal{conv}(\textnormal{gens}(I))])$. 
\end{proof}

\begin{remark}
Notice that we can use a basis of $\mathcal{P}(\Z)_t$ to produce a generating set in $\widetilde{g}(R)$ via $\phi$. Our goal is to define a large subgroup of $\widetilde{g}(R)$ that embeds in $\mathcal{P}(\Z)_t$ to produce a factorization in a submonoid of $ic(R)$ unique up to translation. 
\end{remark}

\begin{remark}\label{basisFactor}
Via $\phi$ we can use the basis of $\mathcal{P}(\Z)_t$ for factorization in $\widetilde{g}(R)$ since $\phi$ is onto. Indeed, if $\mathcal{B}$ is a basis consisting of (translations classes) of polytopes (rather than formal differences) in $\mathcal{P}(\Z)_t$, and $I\in ic(R)_{mon}$, then there are $B_1,\dots,B_n\in\mathcal{B}$ and $m_1,\dots,m_n\in\Z$ such that 
\[
[I]=[\overline{\phi(B_1)^{m_1}\dots\phi(B_n)^{m_n}}]
\]
so that if $m_1,\dots,m_n\in\Z$ we can write equality in the group $g(R)$ as
\[
\ux^{\ua}I=\overline{\ux^{\ub}\phi(B_1)^{m_1}\dots\phi(B_n)^{m_n}}
\]
for some $\ua,\ub\in\N^d$. After simplifying the possibly negative exponents we have, in the monoid (i.e. equality in $R$), that there are $\ua,\ub\in\N_+^d$, $B_1,\dots,B_n,C_1,\dots,C_l\in\mathcal{B}$, and $m_1,\dots,m_n,p_1,\dots,p_l\in\N$ so that
\[
\overline{\ux^{\ua}\phi(C_1)^{p_1}\dots\phi(C_l)^{p_l}I}=\overline{\ux^{\ub}\phi(B_1)^{m_1}\dots\phi(B_n)^{m_n}}.
\]
Or, equivalently using the cancellation property that $\overline{HK}:K=\overline{H}$ (see \cite[Lemma 1.14]{lip88}) we have that
\begin{equation}
\begin{split}
    \overline{\ux^{\ub}\phi(B_1)^{m_1}\dots\phi(B_n)^{m_n}}:\overline{\ux^{\ua}\phi(C_1)^{p_1}\dots\phi(C_l)^{p_l}}\\
    =\overline{\ux^{\ua}\phi(C_1)^{p_1}\dots\phi(C_l)^{p_l}I}:\overline{\ux^{\ua}\phi(C_1)^{p_1}\dots\phi(C_l)^{p_l}}=I.
\end{split}
\end{equation}
Hence we have that 
\[
I = \overline{\ux^{\ub}\phi(B_1)^{m_1}\dots\phi(B_n)^{m_n}}:\overline{\ux^{\ua}\phi(C_1)^{p_1}\dots\phi(C_l)^{p_l}}.
\]
It is our primary goal, then, to describe the elements of $\mathcal{B}$ under the image of $\phi$.
\end{remark}

We now recall some background for the basis constructed in \cite{funke} to begin the study of its image under $\phi$ (c.f. \cite[Definition 3.2]{funke}). 

\begin{definition}
For a polytope $P\subset\R^d$, define its \textit{height} to be $h(P)=\textnormal{inf}\{\alpha_d\,|\,\ua\in P\}$. Let $h\in\R$ and define the projection map onto $h$ in $\R^d$, $c_h$, by $c_h(x_1,\dots,x_d)=(x_1,\dots,x_{d-1},h)$. We define the \textit{shadow} of a polytope to be
\[
Sh(P)=\textnormal{conv}(P\cup c_h(P)).
\]
\end{definition}

From \cite[Lemma 3.3]{funke} we have that $Sh$ is a group homomorphism on $\mathcal{P}(\Z)_t$. Then, following the construction of the basis of the integral polytope group in \cite[Proposition 4.3, Construction 4.4]{funke} we inductively build $\mathcal{B}_1\subseteq\mathcal{B}_2\subseteq\dots\subseteq\mathcal{B}_d=\mathcal{B}$ where $\mathcal{B}_n$\textbackslash$\mathcal{B}_{n-1}$ consists of only polytopes of dimension $n$. We set $\mathcal{B}_1$ to be the translation classes of 1-dimensional polytopes which are not multiples of one another. We then build $\mathcal{B}_n$ from $\mathcal{B}_{n-1}$ by taking the collection of all rank $n$ pure subgroups of $\Z^d$, denoted $\mathcal{U}_n$, then intersecting $\mathcal{B}_{n-1}$ with each element of $U\in\mathcal{U}_n$ and construct the set $\mathcal{C}_n^U=\{Sh(B)\,|\,B\in\mathcal{B}_{n-1}\cap U\textnormal{ and }Sh(B)\textnormal{ is }n\textnormal{-dimensional}\}$ which extends to a basis of $\mathcal{P}(U)_t$ via \cite[Proposition 4.3]{funke}. Taking the union over all of $\mathcal{U}_n$ of these bases yields $\mathcal{B}_n$.

\begin{example}
As in \cite[Example 4.6]{funke} for $d=2$ we would have that the basis of $\mathcal{P}(\Z)_t$ are all the 1-dimensional polytopes which are not multiples of each other, and their shadows, i.e. right triangles. In the context of the monoid of integrally closed monomial ideals in two variables, say $x$ and $y$, we use \cref{basisFactor} to state that each $I\in ic(k[x,y])_{mon}$ can be written as 
\[
I = \overline{x^{a_0}y^{b_0}(x^{a_1},y^{b_1})\dots(x^{a_l},y^{b_l})}:\overline{x^{c_0}y^{d_0}(x^{c_1},y^{d_1})\dots(x^{c_m},y^{d_m})}
\]
for some $a_0,\dots,a_l,b_0,\dots,b_l,c_0,\dots,c_m,d_0,\dots,d_m\in\N$ and applying $\phi$ to 1-dimen-sional polytopes (i.e. lines) and right triangles after we translate the vertices to lay on the $x-$ and $y-$axes which is possible for both polytopes in this context. 
\end{example}


\section*{Acknowledgements}
The author would like to thank New Mexico State University Department of Mathematical Sciences for their support while researching this topic. The author would also like to thank Jonathan Monta{\~n}o for his helpful discussions and support as an advisor at New Mexico State University. The author would also like to thank Alfred Geroldinger for his helpful comments and discussions in generalizing some results.

\bibliographystyle{amsplain}
\bibliography{refs} 

\end{document}